\theoremstyle{plain}\newtheorem{definition}{Definition}[section]
\theoremstyle{definition}\newtheorem{theorem}{Theorem}[section]
\theoremstyle{plain}\newtheorem{lemma}[theorem]{Lemma}
\theoremstyle{plain}
\theoremstyle{plain}
\theoremstyle{remark}\newtheorem{remark}{Remark}[section]
\theoremstyle{definition}
\theoremstyle{plain}
\def\no{\noindent}
\newcommand{\norm}[1]{\left\|#1\right\|}
\newcommand{\Div}{\mathrm{div}\,}
\newcommand{\B}{\Big}
\newcommand{\be}{\begin{equation}}
\newcommand{\ee}{\end{equation}}
 \newcommand{\ba}{\begin{aligned}}
 \newcommand{\ea}{\end{aligned}}
  \newcommand{\f}{\frac}
  \newcommand{\ben}{\begin{enumerate}}
   \newcommand{\een}{\end{enumerate}}
\newcommand{\Rmnum}[1]{\expandafter\@slowromancap\romannumeral #1@}
\numberwithin{equation}{section}
\begin{document}
\title{Energy conservation of weak solutions for the incompressible Euler equations via vorticity}
 \author{Jitao Liu,\footnote{Department of Mathematics, Faculty of Science, Beijing University of Technology, Beijing, 100124, China  Email: jtliu@bjut.edu.cn, jtliumath@qq.com} ~~
 Yanqing Wang,\footnote{   College of Mathematics and   Information Science, Zhengzhou University of Light Industry, Zhengzhou, Henan  450002,  P. R. China Email: wangyanqing20056@gmail.com}
 ~~~and~~~Yulin Ye\footnote{School of Mathematics and Statistics,
Henan University,
Kaifeng, 475004,
P. R. China. Email: ylye@vip.henu.edu.cn}}
\date{}
\maketitle
\begin{abstract}
Motivated by the works of  Cheskidov, Lopes Filho, Nussenzveig Lopes and Shvydkoy in \cite[{\it Commun. Math. Phys.} 348: 129-143, 2016]{[CLNS]} and Chen and Yu in
 \cite[{\it J. Math. Pures Appl.} 131: 1-16, 2019]{[CY]},
 we address how the $L^p$ control of vorticity could influence the energy conservation for the incompressible homogeneous and nonhomogeneous Euler equations in this paper. For the homogeneous flow in the periodic domain or whole space, we provide a self-contained proof for the criterion $\omega=\text{curl}u\in L^{3}(0,T;L^{\f{3n}{n+2}}(\Omega))\,(n=2,3)$, which generalizes the corresponding result in  \cite{[CLNS]} and can be viewed as  in Onsager critical   spatio-temporal spaces. Regarding the nonhomogeneous flow,
 it is shown  that the energy is conserved as long as the vorticity lies in the same space as before and $\nabla\sqrt{\rho}$ belongs to $L^{\infty}(0,T;L^{n}(\mathbb{T}^{n}))\,(n=2,3)$, which
   gives an affirmative answer to a problem proposed by  Chen and Yu in
 \cite{[CY]}.
\end{abstract}
\noindent {\bf MSC(2020):}\quad 42B35, 35L65, 76D03, 76D05, 35Q35 \\\noindent
{\bf Keywords:} Euler equations; nonhomogeneous Euler  equations; energy conservation;  vorticity \\
\section{Introduction}
\label{intro}

The incompressible homogeneous Euler  equations describing the motion of  inviscid  fluid with constant density are governed by
\begin{equation}\left\{\begin{aligned}\label{Euler}
&v_{t}+v\cdot \nabla v+\nabla \pi=0,~~\text{in}~~~~\Omega\times(0,\,T),\\ &\text{div}\, v=0,~~~~~~~~~~~~~~~~~~\text{in}~~~~\Omega\times(0,\,T),\\
&v|_{t=0}=v_{0}(x)~~~~~~~~~~~~~~\text{on} ~~~ \Omega\times\{t=0\},
\end{aligned}\right.\end{equation}
where $v$ and $\pi$ stands for the velocity field and
 the  pressure of the fluid, respectively. Let $\Omega$ denote the whole space $\mathbb{R}^{n}$ or periodic domain $\mathbb{T}^{n}$ with the spatial dimension $n=2$ or $n=3$. Moreover, we denote the vorticity of velocity field $v$ by $\omega=\text{curl}\,v$.

It is well-known that, for any smooth velocity field $v$, the $L^{2}$ energy of incompressible homogeneous Euler equations \eqref{Euler} is conserved as follows
\be\label{ec}
\int_{\Omega}|v(x,t)|^{2}dx=\int_{\Omega}|v(x,0)|^{2}dx, ~~\text{for all}~~ t>0.
\ee
However, if $v$ is not smooth enough, the energy equality may not hold. Therefore, there are many substantial effort contributing to investigate the relation between energy
conservation and regularity of weak solutions over the past years. This issue is closely connected to Onsager's seminal work \cite{[Onsager]}, where he conjectured that the weak solutions with H\"older continuity exponent $\alpha>\frac{1}{3}$ do conserve energy and turbulent or anomalous dissipation occurs when H\"older continuity exponent $\alpha\leq \frac{1}{3}$.
We call the two different parts of Onsager's conjecture as {\it positive part} and {\it negative part}.

For the {\it positive part}, the first rigorous proof was presented by Eyink in \cite{[GLE]}, where it is proved that conservation holds in a subset of $C^{\alpha}$ with $\alpha >\frac{1}{3}$. Subsequently,
Constantin-E-Titi \cite{[CET]} expanded this class to  Besov space $L^{3}(0,T; B^{\alpha}_{3,\infty}(\mathbb{T}^{3}))$ with $\alpha>1/3$. Later, Cheskidov-Constantin-Friedlander-Shvydkoy \cite{[CCFS]} generalized it to a wider range of space, i.e. the critical space $L^{3}(0,T; B^{1/3}_{3,c(\mathbb{N})})$, where  $B^{1/3}_{3,c(\mathbb{N})}=\{v\in B^{1/3}_{3,\infty}, \lim_{q\rightarrow\infty}2^{q}\|\Delta_{q}v\|^{3}_{L^{3}}=0\}$ and $\Delta_{q}$ represents a smooth restriction of $v$ into Fourier modes of order $2^q$. Along this direction, there are some progresses recently, one can refer to \cite{[FW2018],[BGSTW]} for details. Regarding the {\it negative part}, through the application of convex integration argument, it has been widely investigated by many mathematicians, such as De Lellis and  Szekelyhidi \cite{[DS0],[DS1],[DS2],[DS3]}, Isett \cite{[Isett]} and so on.

As everyone knows, in fluid mechanics, the vorticity is an important physical quantity(see \cite{[MB]}). However, we can notice that all above control is about velocity field, there is no control on the integrability of vorticity. In \cite{[CLNS]}, for the two dimensional flow, Cheskidov, Lopes Filho, Nussenzveig Lopes and Shvydkoy found that
the energy is conserved if the vorticity  $\omega$ lies in $ L^{\infty}(0,T;L^{\f32}(\mathbb{T}^2))$.
In addition, they also constructed an example of velocity field in the class $\omega\in L^{(\f32)^{-}}(\mathbb{T}^2)$ and $ v\in B^{\f13}_{3,\infty}(\mathbb{T}^2)$
with non-vanishing energy flux, which shows that the Onsager regularity threshold does not relax even with an additional $L^p$-control on vorticity. Nevertheless, compared with above control on velocity field, the time control on the vorticity is only $L^\infty(0,T)$, other than $L^3(0,T)$.  A natural question is whether
the time direction can be relaxed.  Recall the classical embedding relations
\be\label{inclusion relationship}
\dot{W}^{1,\f32}(\mathbb{T}^2)\hookrightarrow \dot{H}^{\f23}(\mathbb{T}^2)\approx \dot{B}^{\f23}_{2,2}(\mathbb{T}^2)
\hookrightarrow\dot{B}^{\f13}_{3,c(\mathbb{N})}(\mathbb{T}^2),
\ee
and the aforementioned Onsager's result  $L^{3}(0,T; B^{1/3}_{3,c(\mathbb{N})})$ due to Cheskidov-Constantin-Friedlander-Shvydkoy, we notice that the control on vorticity should be relaxed to $\omega \in L^{3}(0,T;L^{\f32}(\mathbb{T}^2))$ theoretically. In current paper, for both two and three dimensional flow, periodic domain and whole space, we will present a self-contained proof for the criterion $\omega\in L^{3}(0,T;L^{\f{3n}{n+2}}(\Omega))$. We state our first result as follows.
\begin{theorem}\label{the1.1} Let  $v$ be a  weak solution of incompressible homogeneous Euler equations \eqref{Euler} in the sense of Definition \ref{eulerdefi}. Assume the vorticity
\be\label{lwy}
\omega\in L^{3}(0,T;L^{\f{3n}{n+2}}(\Omega)),\ee
 for $\Omega=\mathbb{T}^{n}$ or $\Omega=\mathbb{R}^{n}$ with $n=2,3$,
then the energy of $v$ is preserved, that is, for any $t\in [0,T]$,
$$\|v(t)\|_{L^{2}(\Omega)}=\|v(0)\|_{L^{2}(\Omega)}.$$
\end{theorem}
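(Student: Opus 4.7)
The plan is to combine the Constantin--E--Titi mollification scheme with the vorticity identity $v\cdot\nabla v = \omega\times v + \nabla(|v|^{2}/2)$ (replaced by $\omega\,v^{\perp} + \nabla(|v|^{2}/2)$ when $n=2$), so that the critical commutator appears as a product involving $\omega$ rather than as a symmetric tensor in $v$ alone. Fixing a standard mollifier $\eta_{\epsilon}$, setting $v^{\epsilon}:=v\ast\eta_{\epsilon}$, mollifying \eqref{Euler} and testing against $v^{\epsilon}$ (the pressure drops because $\Div v^{\epsilon}=0$) produces
\begin{equation*}
\tfrac12\|v^{\epsilon}(t)\|_{L^{2}}^{2} - \tfrac12\|v^{\epsilon}(0)\|_{L^{2}}^{2} = \int_{0}^{t}\!\!\int_{\Omega} R_{\epsilon}(v,v):\nabla v^{\epsilon}\,dx\,ds, \quad R_{\epsilon}(v,v) := (v\otimes v)^{\epsilon} - v^{\epsilon}\otimes v^{\epsilon}.
\end{equation*}
The left side converges to $\|v(t)\|_{L^{2}}^{2} - \|v(0)\|_{L^{2}}^{2}$, so the task reduces to showing the right-hand side tends to zero.

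I would not attack $R_{\epsilon}$ directly: in our regime this would be Onsager-critical and only produce an $O(1)$ bound. Instead, integrate by parts and use the vorticity identity inside the mollification. The gradient piece $\nabla(|v|^{2}/2)^{\epsilon}$ vanishes against the divergence-free $v^{\epsilon}$, the term $v^{\epsilon}\cdot\nabla v^{\epsilon}$ contracted with $v^{\epsilon}$ vanishes for the same reason, and the algebraic identity $(\omega^{\epsilon}\times v^{\epsilon})\cdot v^{\epsilon}\equiv 0$ then yields
\begin{equation*}
\int R_{\epsilon}(v,v):\nabla v^{\epsilon}\,dx = -\int_{\Omega} S_{\epsilon}(x)\cdot v^{\epsilon}(x)\,dx, \quad S_{\epsilon} := (\omega\times v)^{\epsilon} - \omega^{\epsilon}\times v^{\epsilon}.
\end{equation*}
This trades one factor of $\nabla v$ for one factor of $\omega$, which is exactly what the hypothesis \eqref{lwy} is tailored to absorb.

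To bound $S_{\epsilon}$ I would use the standard difference representation
\begin{equation*}
S_{\epsilon}(x) = \int \eta_{\epsilon}(x-y)\bigl(\omega(y)-\omega(x)\bigr)\times\bigl(v(y)-v(x)\bigr)\,dy - \bigl(\omega^{\epsilon}-\omega\bigr)(x)\times\bigl(v^{\epsilon}-v\bigr)(x).
\end{equation*}
Calder\'on--Zygmund applied to Biot--Savart gives $\|\nabla v\|_{L^{3n/(n+2)}}\lesssim\|\omega\|_{L^{3n/(n+2)}}$ and Sobolev embedding promotes this to $\|v\|_{L^{3n/(n-1)}}\lesssim\|\omega\|_{L^{3n/(n+2)}}$ (on $\mathbb{T}^{n}$ after subtracting the spatial mean). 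Pairing $\omega$-differences in $L^{3n/(n+2)}$ with $v$-differences in $L^{3n/(n-1)}$ via H\"older, the exponents close at $1/(3n/(n+2))+1/(3n/(n-1))=(2n+1)/(3n)$, so one obtains a bound on $\|S_{\epsilon}(t)\|_{L^{3n/(2n+1)}}$. Continuity of translations in $L^{3n/(n+2)}$ plus the usual mollifier convergences force this norm to zero for a.e.\ $t\in(0,T)$, while pairing with $\|v^{\epsilon}\|_{L^{3n/(n-1)}}\lesssim\|\omega\|_{L^{3n/(n+2)}}$ shows the $s$-integrand of $\int_{0}^{t}\!\int S_{\epsilon}\cdot v^{\epsilon}\,dx\,ds$ is pointwise dominated by $C\|\omega(s)\|_{L^{3n/(n+2)}}^{3}$, which lies in $L^{1}(0,T)$ by \eqref{lwy}. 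Dominated convergence then closes the argument.

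The main obstacle is therefore the criticality of the naive scheme: a direct Constantin--E--Titi bound on $R_{\epsilon}$ under only $\nabla v\in L^{3n/(n+2)}$ balances exactly, producing $\epsilon^{2/3}\cdot\epsilon^{-2/3}=O(1)$ with no intrinsic decay. The structural cancellation $(\omega^{\epsilon}\times v^{\epsilon})\cdot v^{\epsilon}=0$ (or its 2D analog) is what moves the argument off-criticality; after that, one does not need any quantitative rate in $\epsilon$, because the $L^{3}$ time integrability in \eqref{lwy} furnishes exactly the cubic dominating function required for dominated convergence to act on the qualitative $L^{p}$-translation continuity of $\omega$.
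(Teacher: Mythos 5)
Your argument is correct, but it takes a genuinely different route from the paper, and the stated motivation for the detour is based on a misconception. The paper's periodic-domain proof is exactly the ``direct'' scheme you dismiss, except that it uses the asymmetric (Lions-type) commutator $(v\otimes v)^{\varepsilon}-v\otimes v^{\varepsilon}$ rather than the symmetric one: after establishing $v\in L^{3}(0,T;L^{\frac{3n}{n-1}})$ from \eqref{lwy} via Calder\'on--Zygmund and Poincar\'e--Sobolev, it pairs the commutator in $L^{\frac32}_tL^{\frac{3n}{2(n-1)}}_x$ against $\nabla v^{\varepsilon}$, which is \emph{uniformly bounded} in $L^{3}_tL^{\frac{3n}{n+2}}_x$. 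There is no $\varepsilon^{-2/3}$ loss to beat --- that loss only appears if one insists on measuring $\nabla v^{\varepsilon}$ in $L^{3}_x$ \`a la Constantin--E--Titi --- so no quantitative rate is needed: $(v\otimes v)^{\varepsilon}\to v\otimes v$ and $v\otimes v^{\varepsilon}\to v\otimes v$ strongly in $L^{\frac32}_tL^{\frac{3n}{2(n-1)}}_x$ by standard mollifier properties, and the flux term dies. (The same qualitative argument kills your symmetric $R_{\varepsilon}$ as well.) Your alternative --- rewriting $v\cdot\nabla v=\omega\times v+\nabla(|v|^{2}/2)$, exploiting $(\omega^{\varepsilon}\times v^{\varepsilon})\cdot v^{\varepsilon}=0$, and estimating the Lamb-vector commutator $S_{\varepsilon}$ with the difference representation, translation continuity in $L^{\frac{3n}{n+2}}$, and dominated convergence against the cubic majorant --- is sound (your exponents close correctly and the dominating function is in $L^{1}(0,T)$ once you include the $\|v\|_{L^{2}}$ contribution from the mean on the torus), but it buys nothing here since Calder\'on--Zygmund already converts $\omega$-control into full gradient control. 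Finally, note that the paper treats $\Omega=\mathbb{R}^{n}$ by a separate Littlewood--Paley argument with $S_{N}$ truncations (to stay within the admissible class of test functions), a case your proposal does not address explicitly, though your scheme would adapt with the usual cutoff modifications.
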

\begin{remark}
This theorem is an improvement of Cheskidov, Lopes Filho, Nussenzveig Lopes and Shvydkoy's result in \cite{[CLNS]} when the dimension $n=2$.
\end{remark}
\begin{remark}
To our best knowledge, for three dimensional flow, the sufficient condition $\omega\in L^{3}(0,T;L^{\f{9}{5}}(\Omega))$ for energy conservation should be the first control on the integrability of vorticity in the $L^p$ frame work.
\end{remark}
\begin{remark}
In fact, in three dimensional flow,  the following inclusion relationship
 \begin{equation}\label{3d}
 \dot{W}^{1,\frac{9}{5}} \hookrightarrow \dot{H}^{\f56} \hookrightarrow \dot{B}^{\f56}_{2,2} \hookrightarrow \dot{B}^{\frac{1}{3}}_{3,c(\mathbb{N})}
 \end{equation}
is valid. In consideration of Cheskidov-Constantin-Friedlander-Shvydkoy's work, \eqref{3d} together with \eqref{inclusion relationship} indicates that the integrability exponent of vorticity is in {\it Onsager critical}  spaces.
\end{remark}
\begin{remark}
By means of the generalized Biot-Savart law for the  incompressible flows
$$
\partial_{i}\partial_{i}u_{j}=\partial_{i}(\partial_{j}u_{i}+\partial_{i}u_{j}),
$$
the vorticity
$\omega $  can be replaced  by  deformation tensor  $\mathcal{D}(u)=\f{1}{2}(\nabla u+\nabla u^{^{\text{T}}})$ here.
\end{remark}

For the three dimensional incompressible homogeneous Navier-Stokes equations on the bounded domain, the energy equality  criterion  $\nabla u\in L^{3} (0,T;L^{\f95}(\Omega))$ is recently obtained by Berselli and  Chiodaroli  in \cite{[BC]} (see \cite{[WMH]}  for the Cauchy problem and   related work \cite{[Shvydkoy]}). It is worth remarking that the condition \eqref{lwy} leads to
$$
v\in L^{3}(0,T; L^{\f{3n}{n-1}}(\Omega)),$$
which are the Onsager-critical spaces defined by  Shvydkoy  for the incompressible homogeneous Euler equations  in \cite{[Shvydkoy]}. In addition,
the proof of Theorem \ref{the1.1} is partial motived by  Cheskidov, Lopes Filho,  Nussenzveig Lopes and  Shvydkoy's work \cite{[CLNS]} and recent energy balance criterion based on a combination of velocity field and its gradient for the weak solutions of Navier-Stokes equations in \cite{[WY]}.  For the whole spaces $\mathbb{R}^{n}$ and periodic domain $\mathbb{T}^{n}$, we will provide two  completely different proof.

In the second part of this paper, we will consider the following incompressible nonhomogeneous Euler equations that describes the inviscid fluids with non-constant density:
\be\left\{\ba\label{NEuler}
&\rho_t+\nabla \cdot (\rho v)=0, \\
&(\rho v)_{t} +\Div(\rho v\otimes v)+\nabla
\pi=0, \\
&\Div v=0.
 \ea\right.\ee
In real world applications, fluctuations of density are widely present in turbulent flows, which is because different species concentrations, temperature variation or pressure.
For example, in geophysical fluids, when external forces are present (e.g. gravity), density stratification may occurs due to temperature and
salinity gradients (ocean) or moisture effects (atmosphere). This phenomenon needs to be described by the model \eqref{NEuler}, other than \eqref{Euler}.

Compared to the homogeneous flows, the presence of density will bring additional challenges. In \cite{[LS]}, Leslie-Shvydkoy  studied the energy conservation of weak solutions for the incompressible nonhomogeneous Euler equations \eqref{NEuler} and showed that if the weak solutions
satisfy
$$\ba
&0<c_{1}\leq \rho\leq c_{2}<\infty,\rho\in L^{a}(0,T;B^{\f13}_{a,\infty}),\\
&v\in L^{b}(0,T;B^{\f13}_{b,c(\mathbb{N})}),\pi\in L^{\f{b}{2}}(0,T;B^{\f13}_{\f{b}{2},\infty}),\f1a+\f3b=1,b\geq3,
\ea$$
then $v$ conserves energy. To handle the vacuum, Feireisl-Gwiazda-Gwiazda-Wiedemann \cite{[EGSW]}  used Constantin-E-Titi type
commutators on mollifying kernel to derive the following energy conservation criterion:
$$
v \in B_{p, \infty}^{\alpha} (0, T;\mathbb{T}^{n} ), \quad \rho, \rho v \in B_{\f{p}{p-2}, \infty}^{\beta} (0, T;\mathbb{T}^{n} ), \quad \pi \in L_{l o c}^{\f{p}{p-1}} (0, T; \mathbb{T}^{n} ),
$$
with  $2 \alpha+\beta>1$ and $0 \leq \alpha, \beta \leq 1$. By means of the Lions type commutators on mollifying kernel, Chen and Yu \cite{[CY]} established the sufficient conditions allowing vacuum:
\be\ba\label{cy}
&\rho \in L^{\infty} (0, T;\mathbb{T}^{n} ) \cap L^{\f{q}{q-3}} (0, T; W^{1, \f{q}{q-3}} (\mathbb{T}^{n} ) ), \rho_{t} \in L^{\f{q}{q-3}} (0, T;\mathbb{T}^{n} ),\\
&v\in L^{q} (0, T ; B_{q, \infty}^{\alpha} (\mathbb{T}^{n} ) ), \alpha>1/3.
 \ea\ee
Later, in \cite{[NNT2]}, Nguyen, Nguyen and Tang obtained the energy conservation criterion as follows:
\be\ba\notag
&0\leq \rho \in L^{\infty} (0, T;\mathbb{T}^{n} ) \cap L^{\infty} (0, T ; {\mathcal{V}}_{\delta_0}^{\f23, \infty} (\mathbb{T}^{n} ) ), \rho^{-1} \in L^{p} (0, T;\mathbb{T}^{n} ),{\bf 1}_{\rho\leq\delta_0}\partial_t\rho\in L^{q} (0, T;\mathbb{T}^{n} ),\\
&v\in L^{3} (0, T;\mathbb{T}^{n} ) \cap L^{3} (0, T ; {\mathcal{V}}_{\delta_0}^{\f13, 3} (\mathbb{T}^{n} ) ), \limsup_{\delta\rightarrow0}\|v\|_{ L^{3} (0, T ; {\mathcal{V}}_{\delta_0}^{\f13, 3} (\mathbb{T}^{n} ) )}=0,\pi\in L^{\f32} (0, T;\mathbb{T}^{n} ).
 \ea\ee
 where $\delta_0>0$ and $p>0,\,q\geq3$ such that $\f1p+\f1q=\f13$, $\|f\|_{{\mathcal{V}}_{\delta}^{\beta, p} (\mathbb{T}^{n})}=\sup_{|h|<\delta}|h|^{-\beta}\|f(\cdot+h)-f(\cdot)\|_{L^p(\mathbb{T}^{n} )}$ and $\|f\|_{ L^{q} (0, T ; {\mathcal{V}}_{\delta}^{\beta, p} (\mathbb{T}^{n}))}=\Big(\int_0^T\|f(t)\|_{{\mathcal{V}}_{\delta}^{\beta, p} (\mathbb{T}^{n})}^q\Big)^{\f1q}$
 for $\delta>0,\,\beta>0,\,p\geq1$ and $1\leq q\leq\infty$.

In
 \cite[Remark 1.1 (4), p.5]{[CY]}, Chen and Yu   proposed a question:
 {\it  It would be interesting to try to understand how an $L^p$ control of the vorticity could affect the energy conservation for density-dependent incompressible Euler system.  }
Indeed, by taking curl on both sides of \eqref{NEuler}, one can derive the vorticity equations of two dimensional flows as
\begin{equation}\label{2dv}
\partial_tw +u\cdot \nabla  w=\f{\nabla^{\bot}\rho\cdot\nabla\pi}{{\rho}^2},
\end{equation}
and three dimensional flow as
\begin{equation}\label{3dv}
\partial_tw +u\cdot \nabla w=w\cdot \nabla u+\f{\nabla\rho\times\nabla\pi}{{\rho}^2},
\end{equation}
respectively, where $\nabla^{\bot}=(\partial_2,-\partial_1)$. Due to the presence of nonlinear term involving density and pressure, we can discover that, for two dimensional flows, the voriticity is not conserved from initial data any more, which is quite different from homogenous flows. Moreover, the vorticity may be produced from density and pressure gradients (baroclinic torque). All these phenomena reveal the fluctuation of energy may come from the loss of regularity of the density.

In this paper,  we will give a complete answer to  Chen-Yu's issue proposed in \cite{[CY]} for both two and three dimensional flow.  When we were preparing this paper, we notice that, for the two dimensional model \eqref{NEuler}, Chen \cite{[Chen]} derived the sufficient conditions as follows
\be\ba\label{chen}
&\rho \in L^{\infty}([0,T]\times\mathbb{T}^{2})\cap L^{2}(0,T;H^{1}(\mathbb{T}^{2})),\omega \in L^{6}(0,T;L^{\f32}(\mathbb{T}^{2})),\\
&\sqrt{\rho}v\in L^{\infty}(0,T;L^{2}(\mathbb{T}^{2})).
\ea
\ee
Similar with the homogenous flow, one can found that, the time integrity exponent on vorticity 6 in \eqref{chen} is not Onsager critical in the $L^p$ frame work. Besides, for the more widely used three dimensional flows in real world applications, it is not considered in \cite{[Chen]}. In fact, from the vorticity equations \eqref{2dv} and \eqref{3dv}, we can learn that the three dimensional flow is much more complicated than two dimensional one, due to the presence of stretching term $w\cdot \nabla u$. In present work, we will entirely solve the above unfinished issues. To clarify the effect of density on conservation of energy, we consider the cases without vacuum and allowing vacuum both. Now, we show the first result for the case without vacuum.

\begin{theorem}\label{the1.2}
Let $\Omega =\mathbb{T}^n$ with $n=2,3$. Then for any $t\in [0,T]$, the energy
 $$
 \frac{1}{2}\int_{\mathbb{T}^n}\rho(x,t)|v(x,t)|^{2}dx=\frac{1}{2}\int_{\mathbb{T}^n}\rho(x,0)|v(x,0)|^{2}dx,
  $$
 is conserved for any weak solution  $(\rho, v)$  to the incompressible nonhomogeneous Euler equations \eqref{NEuler} as long as
 \be\label{key01}
\ba
&0<c_{1}\leq \rho\leq c_{2}<\infty,  \omega\in L^{3}(0,T;L^{\f{3n}{n+2}}(\mathbb{T}^{n})), \pi\in L^{\f32}(0,T;L^{\f{3n}{2n-2}}(\mathbb{T}^n) ),\\
&{\text and}\ \sqrt{\rho_0}v_0\in L^{2+\delta} ~\text{for~ any}~ \delta>0,
\ea\ee
where $c_1,c_2$ are two positive constants.
\end{theorem}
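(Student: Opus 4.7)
The plan is to adapt the Constantin--E--Titi mollification / commutator method to the nonhomogeneous setting (in the spirit of \cite{[LS],[CY],[Chen]}), using the vorticity-to-velocity gain from Theorem~\ref{the1.1} to supply the Onsager-critical velocity regularity. First, Calder\'on--Zygmund theory on $\mathbb{T}^n$ together with the Sobolev embedding upgrade the hypothesis $\omega\in L^{3}(0,T;L^{3n/(n+2)})$ into $\nabla v \in L^{3}(0,T;L^{3n/(n+2)})$ and $v \in L^{3}(0,T;L^{3n/(n-1)})$, exactly as in the proof of Theorem~\ref{the1.1}. Let $\eta_\epsilon$ be a standard spatial mollifier and write $f^\epsilon:=f\ast\eta_\epsilon$; mollify both equations of \eqref{NEuler} and, using $\rho\geq c_1>0$, also the rewritten form $\partial_t v + v\cdot\nabla v + \nabla\pi/\rho=0$.

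Differentiating $\tfrac{1}{2}\int_{\mathbb{T}^n}\rho|v^\epsilon|^{2}\,dx$ in time, substituting $\partial_t v^\epsilon$ from the mollified momentum equation and $\partial_t\rho$ from the continuity equation, and integrating by parts using $\operatorname{div}v=\operatorname{div}v^\epsilon=0$, one arrives after reorganization at
\begin{equation*}
\frac{d}{dt}\frac{1}{2}\int_{\mathbb{T}^n}\rho|v^\epsilon|^{2}\,dx = A^\epsilon(t) + B^\epsilon(t),
\end{equation*}
where $A^\epsilon := \int_{\mathbb{T}^n} \rho\,v^\epsilon\cdot[(v\cdot\nabla)v^\epsilon-(v\cdot\nabla v)^\epsilon]\,dx$ is the convective commutator and $B^\epsilon := -\int_{\mathbb{T}^n} \rho\,v^\epsilon\cdot(\nabla\pi/\rho)^\epsilon\,dx$ is the pressure--density term. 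With $\rho\in L^{\infty}$ absorbed as a weight, the Constantin--E--Titi identity together with H\"older's inequality (spatial balance $\tfrac{n-1}{3n}+\tfrac{n-1}{3n}+\tfrac{n+2}{3n}=1$, temporal balance $\tfrac13+\tfrac13+\tfrac13=1$) gives $A^\epsilon\to 0$ in $L^{1}(0,T)$, exactly as in Theorem~\ref{the1.1}. For $B^\epsilon$, the principal piece $-\int v^\epsilon\cdot\nabla\pi^\epsilon\,dx$ vanishes by $\operatorname{div}v^\epsilon=0$, leaving the residual commutator
\begin{equation*}
B^\epsilon(t) = -\int_{\mathbb{T}^n}\!\!\int \eta_\epsilon(y)\,\frac{\rho(x)-\rho(x-y)}{\rho(x-y)}\,v^\epsilon(x)\cdot\nabla\pi(x-y)\,dy\,dx,
\end{equation*}
which, after an integration by parts in $y$ organized so that $\nabla\rho$ never appears, is bounded schematically by $\|\pi(t)\|_{L^{3n/(2n-2)}}\,\|\nabla v(t)\|_{L^{3n/(n+2)}}\cdot o_\epsilon(1)$; the H\"older exponents match exactly via $\tfrac{2n-2}{3n}+\tfrac{n+2}{3n}=1$ and $\tfrac{2}{3}+\tfrac{1}{3}=1$, so the assumption $\pi\in L^{3/2}(L^{3n/(2n-2)})$ in \eqref{key01} is precisely what is required to conclude $B^\epsilon\to 0$ in $L^{1}(0,T)$.

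Integrating on $[t_1,t_2]\subset(0,T)$ and sending $\epsilon\to0$ yields conservation of $\tfrac12\int\rho|v|^2$ on every interior interval; the hypothesis $\sqrt{\rho_0}v_0\in L^{2+\delta}$ then supplies the extra uniform integrability needed, together with the weak-$*$ continuity already available, to pass to $t_1=0$ via Vitali, thereby closing the identity at the initial time. The main obstacle is $B^\epsilon$: in the homogeneous setting of Theorem~\ref{the1.1} the pressure drops out immediately by divergence-freeness, whereas in the present variable-density case it is coupled to $\rho$ inside the quotient $\nabla\pi/\rho$, and the only control available on $\rho$ is its two-sided pointwise bound. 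Consequently any careless integration-by-parts produces $\nabla\rho$, which is not controlled; the technical heart of the argument is to reorganize the commutator so that $\nabla\rho$ never appears, while producing the precise H\"older pairing $\pi\cdot\nabla v$ matched exactly by \eqref{key01}.
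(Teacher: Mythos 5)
There is a genuine gap at the pressure term, which you yourself single out as the technical heart but do not actually resolve. Your opening steps (upgrading $\omega\in L^{3}(0,T;L^{3n/(n+2)})$ to $\nabla v\in L^{3}(0,T;L^{3n/(n+2)})$ and $v\in L^{3}(0,T;L^{3n/(n-1)})$, then running a mollification argument) match the paper. But first, the reformulation $\partial_t v+v\cdot\nabla v+\nabla\pi/\rho=0$ is problematic at the level of weak solutions: under \eqref{key01} the pressure is only assumed to satisfy $\pi\in L^{3/2}(0,T;L^{3n/(2n-2)})$, so $\nabla\pi$ is merely a first-order distribution, and its quotient by the $L^{\infty}$ function $\rho$ (hence also $(\nabla\pi/\rho)^{\varepsilon}$) is not well defined. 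Second, and more seriously, even interpreting $B^{\varepsilon}$ formally as you write it, the residual commutator pairs the increment $\rho(x)-\rho(x-y)$ against $\nabla\pi(x-y)$, while Theorem \ref{the1.2} imposes no spatial regularity on $\rho$ beyond the two-sided bound $c_{1}\le\rho\le c_{2}$. The proposed fix --- an integration by parts in $y$ ``organized so that $\nabla\rho$ never appears'' --- is exactly the step that cannot be carried out: once the derivative is moved off $\pi(x-y)$ it must land either on $\eta_{\varepsilon}(y)$, which costs a factor $\varepsilon^{-1}$ that would have to be offset by $O(\varepsilon)$ smallness of $\rho(x)-\rho(x-y)$ (unavailable for a merely bounded measurable density; continuity of translations only gives $o(1)$), or on the density quotient, which produces the forbidden $\nabla\rho$; moving it onto $v^{\varepsilon}(x)$ by an $x$-integration by parts again differentiates $\rho(x)$. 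You have correctly located the obstruction but not overcome it.

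The paper circumvents this by a structurally different choice: it does not divide the equation by $\rho$, but tests the momentum equation with $\bigl(\phi(t)(\rho v)^{\varepsilon}/\rho^{\varepsilon}\bigr)^{\varepsilon}$ in the Leslie--Shvydkoy/Nguyen--Nguyen--Tang style. The pressure contribution then reduces, using $\operatorname{div}v^{\varepsilon}=0$, to $-\int\phi\,\bigl[(\rho v)^{\varepsilon}-\rho^{\varepsilon}v^{\varepsilon}\bigr](\rho^{\varepsilon})^{-1}\cdot\nabla\pi^{\varepsilon}$, and the key point is that the commutator $(\rho v)^{\varepsilon}-\rho^{\varepsilon}v^{\varepsilon}$ is $O(\varepsilon)$ in $L^{3}(0,T;L^{3n/(n+2)})$ by Lemma \ref{lem2.2} applied with $f=v\in L^{3}(0,T;W^{1,3n/(n+2)})$ and $g=\rho\in L^{\infty}$: the derivative producing the smallness falls on $v$ (which the vorticity hypothesis controls), not on $\rho$. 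Combined with $\limsup_{\varepsilon\to0}\varepsilon\|\nabla\pi^{\varepsilon}\|_{L^{3/2}(0,T;L^{3n/(2n-2)})}=0$ from Lemma \ref{lem2.1}, the pressure term vanishes with no regularity of $\rho$ whatsoever. To keep your formulation you would need an extra hypothesis such as $\nabla\sqrt{\rho}\in L^{\infty}(0,T;L^{n})$ (which is precisely what Theorem \ref{the1.3} assumes), or you must switch to the paper's test function. The remaining ingredients of your proposal (the convective commutator, which in fact converges by plain strong convergence once $v\in L^{3}(0,T;W^{1,3n/(n+2)})\cap L^{3}(0,T;L^{3n/(n-1)})$, and the recovery of the initial energy via $\sqrt{\rho_{0}}v_{0}\in L^{2+\delta}$ and weak continuity) are consistent with the paper's argument.
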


Compared with Theorem \ref{the1.1}, Theorem \ref{the1.2} is much more involved due to the presence of density in fluid. Because the vacuum is absent, to avoid the commutators in time, we use $\frac{(\rho v)^\varepsilon}{\rho^\varepsilon}$ as the test function in the spirit of \cite{[LS],[NNT]}. Here the function $(\cdot)^\varepsilon$ is mollified only in spatial direction. With the help of Constantin-E-Titi type commutators on mollifying kernel in \cite{[WY],[NNT]} and the fact
$v\in L^{3}(0,T;W^{1,\f{3n}{n+2}}(\mathbb{T}^{n}))$, we can successfully prove Theorem \ref{the1.2}. More importantly, we will show that the result in Theorem \ref{the1.2} also holds for the flow allowing vacuum (see Theorem \ref{the1.3}). Rearding the vacuum case, the quantity $\frac{(\rho v)^\varepsilon}{\rho^\varepsilon}$ makes no sense in
the presence of regions of vacuum and hence can not be chosen as test function any more. To over this difficulty, we mollify the velocity field both in time and space. Besides this,  the other key point is the application of Lions type Poincar\'e inequality in \cite[Remark 5.1, p4]{[Lions2]} and
\cite[Lemma 3.2, page 47]{[Feireisl2004]}, which was noticed in \cite{[YWW]}. The second result for the flow allowing vacuum is stated as follows.
\begin{theorem}\label{the1.3}
Let $\Omega=\mathbb{T}^{n}$
 with $n=2,3$, then the energy equality for any weak solution $(\rho, v)$ to the incompressible nonhomogeneous Euler equation \eqref{NEuler} is valid provided
 \be\label{key02}
\ba
&0\leq \rho\leq c<\infty, ~\nabla\sqrt{\rho}\in L^{\infty}(0,T;L^{n}(\mathbb{T}^{n})),\\& \omega\in L^{3}(0,T;L^{\f{3n}{n+2}}(\mathbb{T}^{n})),\text {and}\ v_0\in L^{2+\delta}(\mathbb{T}^{n})\  \text {for any }\ \delta>0.
\ea\ee
\end{theorem}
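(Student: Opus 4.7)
The plan is to establish the energy identity by a careful space-time mollification, with the Lions-Feireisl Poincar\'e inequality supplying the crucial control across the vacuum set.

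\emph{Preliminary regularity.} First, from the vorticity bound $\omega\in L^{3}(0,T;L^{3n/(n+2)}(\mathbb{T}^{n}))$ and Calder\'on--Zygmund theory applied to the Biot--Savart kernel, I obtain $v\in L^{3}(0,T;W^{1,3n/(n+2)}(\mathbb{T}^{n}))$ and, by Sobolev embedding, $v\in L^{3}(0,T;L^{3n/(n-1)}(\mathbb{T}^{n}))$. Next, the chain rule $\nabla\rho=2\sqrt{\rho}\,\nabla\sqrt{\rho}$ together with $\rho\leq c$ and $\nabla\sqrt{\rho}\in L^{\infty}(0,T;L^{n}(\mathbb{T}^{n}))$ yields $\rho\in L^{\infty}(0,T;W^{1,n}(\mathbb{T}^{n}))$, and hence $\rho v\in L^{3}(0,T;W^{1,3n/(n+2)}(\mathbb{T}^{n}))$. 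These regularities are just enough to give the formal energy identity a meaning.

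\emph{Mollification and the approximate identity.} Because vacuum is permitted, the substitution $(\rho v)^{\varepsilon}/\rho^{\varepsilon}$ employed in Theorem \ref{the1.2} is no longer admissible, since $\rho^{\varepsilon}$ may vanish. Following the Lions-Feireisl strategy, I mollify in space and time: write $f^{\varepsilon}:=(f*_{x}\eta^{\varepsilon})*_{t}\chi^{\varepsilon}$ with $\eta^{\varepsilon},\chi^{\varepsilon}$ standard mollifiers, so that $v^{\varepsilon}$ is smooth and admissible as a test function in the mollified momentum equation. Testing with $v^{\varepsilon}$, using $\nabla\cdot v^{\varepsilon}=0$ to kill the pressure, and substituting $\partial_{t}\rho^{\varepsilon}=-\nabla\cdot(\rho v)^{\varepsilon}$ to eliminate the term $\partial_{t}\rho^{\varepsilon}|v^{\varepsilon}|^{2}$, I arrive at
\begin{equation*}
\int_{\mathbb{T}^{n}}\rho^{\varepsilon}(t)|v^{\varepsilon}(t)|^{2}\,dx-\int_{\mathbb{T}^{n}}\rho^{\varepsilon}(0)|v^{\varepsilon}(0)|^{2}\,dx=2\int_{0}^{t}\!\!\int_{\mathbb{T}^{n}}\bigl[(\rho v_{i}v_{j})^{\varepsilon}-(\rho v)^{\varepsilon}_{i}v^{\varepsilon}_{j}\bigr]\partial_{i}v^{\varepsilon}_{j}\,dx\,ds.
\end{equation*}

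\emph{Vanishing of the commutator.} Decomposing the right-hand side by the Constantin--E--Titi formula and applying H\"older's inequality, I place $\rho v-(\rho v)^{\varepsilon}$ and $v-v^{\varepsilon}$ in $L^{3}(L^{3n/(n-1)})$ (which is available thanks to $\rho v,v\in L^{3}(W^{1,3n/(n+2)})$ and Sobolev embedding), and $\nabla v^{\varepsilon}$ in $L^{3}(L^{3n/(n+2)})$. The spatial exponents balance via $\frac{2(n-1)}{3n}+\frac{n+2}{3n}=1$ and the temporal ones via $\frac{2}{3}+\frac{1}{3}=1$, so the triple product is a uniformly integrable $L^{1}$ quantity tending to $0$ as $\varepsilon\to 0$.

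\emph{Passage to the limit and main obstacle.} It remains to show $\int\rho^{\varepsilon}(t)|v^{\varepsilon}(t)|^{2}\to\int\rho(t)|v(t)|^{2}$ at each fixed time, and likewise at $t=0$, which is delicate on the vacuum set $\{\rho=0\}$ where $v$ is not controlled by the kinetic energy $\sqrt{\rho}v$. The key tool, as noted in \cite{[YWW]}, is the Lions-type Poincar\'e inequality of \cite[Remark 5.1]{[Lions2]} and \cite[Lemma 3.2]{[Feireisl2004]}, which, combined with $\nabla\sqrt{\rho}\in L^{\infty}(0,T;L^{n})$, produces the quantitative estimate needed to absorb the defect $\int(\rho^{\varepsilon}-\rho)|v^{\varepsilon}|^{2}$ near vacuum; the hypothesis $v_{0}\in L^{2+\delta}$ plays the analogous role at $t=0$, where the bound $\sqrt{\rho_{0}}v_{0}\in L^{2}$ alone is insufficient. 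The main obstacle I anticipate is the coupling of the \emph{time}-mollification error with the vacuum set: the time commutator carries no natural smallness from the vorticity hypothesis alone, and it is precisely the combination of $\nabla\sqrt{\rho}\in L^{\infty}(0,T;L^{n})$ with the Lions-Feireisl inequality that produces the missing decay.
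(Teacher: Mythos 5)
Your overall architecture (space--time mollification, a Constantin--E--Titi commutator for the convective term, $v_0\in L^{2+\delta}$ at the initial time) matches the paper's, but there are two concrete gaps. First, the step ``by Sobolev embedding, $v\in L^{3}(0,T;L^{3n/(n-1)})$'' fails as stated in the vacuum case. Calder\'on--Zygmund gives only $\nabla v\in L^{3}(0,T;L^{3n/(n+2)}(\mathbb{T}^n))$, and on the torus the Poincar\'e--Sobolev inequality controls $v-\frac{1}{|\mathbb{T}^n|}\int_{\mathbb{T}^n} v\,dx$, not $v$ itself; the spatial mean of $v$ must be bounded separately, and with vacuum the only a priori $L^2$-type information is $\sqrt{\rho}v\in L^{\infty}(0,T;L^{2})$, which says nothing about $v$ on $\{\rho=0\}$. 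This is exactly where the paper deploys the Lions--Feireisl weighted Poincar\'e argument (Lemma \ref{lem4.1}): using $\int_{\mathbb{T}^n}\rho\,dx>0$ one bounds $\bigl|\int_{\mathbb{T}^n} v\,dx\bigr|$ by $\bigl|\int\rho(v-\bar v)\,dx\bigr|+\bigl|\int\rho v\,dx\bigr|\leq C\|\nabla v\|_{L^{3n/(n+2)}}+C\|\sqrt{\rho}v\|_{L^{2}}$. You do invoke the Lions--Feireisl inequality, but for a different purpose (absorbing $\int(\rho^{\varepsilon}-\rho)|v^{\varepsilon}|^{2}$ near vacuum), so the regularity $v\in L^{3}(L^{3n/(n-1)})$ on which all of your subsequent H\"older exponent counting rests is never actually established.

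Second, your approximate identity is too clean: with $v^{\varepsilon}$ mollified in space and time, $\int\partial_t(\rho v)^{\varepsilon}\cdot v^{\varepsilon}$ is not $\frac{d}{dt}\int\rho^{\varepsilon}|v^{\varepsilon}|^{2}/2$ plus only the convective commutator; there is an unavoidable temporal commutator $\partial_t(\rho v)^{\varepsilon}-\partial_t(\rho v^{\varepsilon})$, together with a residual $\rho_t|v^{\varepsilon}|^{2}$ term that must be cancelled (with a factor $\frac12$) against the piece $\frac12\int\Div(\rho v)\,|v^{\varepsilon}|^{2}$ produced by the convective term via the mass equation. You correctly flag this commutator as the main obstacle, but the mechanism you propose for it (the Poincar\'e inequality near vacuum) is not the right one. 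The paper kills it with the Lions commutator lemma (Lemma \ref{pLions}), which requires $\partial_t\rho,\ \nabla\rho\in L^{3}(0,T;L^{3n/(n+2)})$; these follow from $\rho_t=-2\sqrt{\rho}\,v\cdot\nabla\sqrt{\rho}$ and $\nabla\rho=2\sqrt{\rho}\,\nabla\sqrt{\rho}$, H\"older with $\frac{n-1}{3n}+\frac1n=\frac{n+2}{3n}$, and the previously established $v\in L^{3}(L^{3n/(n-1)})$. You have the ingredients (you note $\rho\in L^{\infty}(W^{1,n})$) but never close this loop. Finally, rather than proving $\int\rho^{\varepsilon}(t)|v^{\varepsilon}(t)|^{2}\to\int\rho(t)|v(t)|^{2}$ at each fixed $t$, which is delicate, the paper first derives the balance against test functions $\phi(t)$ compactly supported in $(0,T)$ and then recovers fixed times from the one-sided strong $L^{2}$-continuity of $\sqrt{\rho}v$, obtained from the energy inequality, weak continuity of $\rho v$, Aubin--Lions compactness for $\sqrt{\rho}$, and $v_0\in L^{2+\delta}$.
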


\begin{remark}
It seems that this theorem is  the first attempt to establish the control on the integrability of vorticity for energy conservation in the 3D incompressible nonhomogeneous  Euler equations.
Theorem \ref{the1.3}
gives a positive answer to  Chen-Yu's issue arising  in \cite{[CY]}.
When the density is a constant,  Theorem  \ref{the1.2} and Theorem \ref{the1.3} reduce  to Theorem  \ref{the1.1}.
\end{remark}

\begin{remark}
With Lemma \ref{lem4.1} in hand, by a slight modified the proof of  \eqref{3.81} and \eqref{3.11}, one can immediately derive \eqref{chen} for the case allowing vacuum.
\end{remark}

The rest of this paper is divided into four sections.
 In Section 2, we present some notations and several useful auxiliary lemmas.
 Section 3 contains two brief different proof of Theorem \ref{the1.1} for the periodic domain and whole space.
 Section 4 is devoted to the energy conservation of weak solutions for incompressible nonhomogeneous Euler equations via vorticity.

\section{Notations and some auxiliary lemmas}

In this section, we introduce the notations and some useful auxiliary lemmas which will play a foundamental role in this paper.

{\bf Sobolev spaces:} For $p\in [1,\,\infty]$, we use the space $L^{p}(0,\,T;X)$ to stand for the set of measurable functions $f(x,t)$ on the interval $(0,\,T)$ with values in $X$ and $\|f(\cdot,t)\|_{X}$ belongs to $L^{p}(0,\,T)$. The classical Sobolev space  $W^{k,p}(\Omega)$
is equipped with the norm $\|f\|_{W^{k,p}(\Omega)}=\sum\limits_{|\alpha| =0}^{k}\|D^{\alpha}f\|_{L^{p}(\Omega)}$ and $\langle f,\,g\rangle$ denotes the standard $L^{2}$ inner product between $f$ and $g$. As usual, for any Schwartz function $f$ on $\mathbb{R}^{n}$, we define the Fourier transform of $f$ by $\hat{f}$ with $\hat{f}(\xi)=\frac{1}{(2 \pi)^{\frac{n}{2}}}\int_{\mathbb{R}^{n}}f (x)e^{- i\xi\cdot x}\,dx$. The inverse Fourier transform $f^{\vee}$ is defined as $f^{\vee}(\xi)=\widehat{f}(-\xi)$.

{\bf Besov spaces:}
 To define Besov  spaces, we need the following dyadic unity partition
(see e.g. \cite{[BCD]}). Choose two nonnegative radial
functions $\varrho$, $\varphi\in C^{\infty}(\mathbb{R}^{n})$
supported respectively in the ball $\{\xi\in
\mathbb{R}^{n}:|\xi|\leq \frac{3}{4} \}$ and the shell $\{\xi\in
\mathbb{R}^{n}: \frac{3}{4}\leq |\xi|\leq
  \frac{8}{3} \}$ such that
\begin{equation*}
 \varrho(\xi)+\sum_{j\geq 0}\varphi(2^{-j}\xi)=1, \quad
 \forall\xi\in\mathbb{R}^{n}; \qquad
 \sum_{j\in \mathbb{Z}}\varphi(2^{-j}\xi)=1, \quad \forall\xi\neq 0.
\end{equation*}
Then for every $\xi\in\mathbb{R}^{n},$ $\varphi(\xi)=\varrho(\xi/2)-\varrho(\xi)$. Denote $h=\mathcal{F}^{-1} \varphi $ and $\tilde{h}=\mathcal{F}^{-1}\varrho$, then nonhomogeneous dyadic blocks  $\Delta_{j}$ are defined by
$$
\Delta_{j} u:=0 ~~ \text{if} ~~ j \leq-2, ~~ \Delta_{-1} u:=\varrho(D) u =\int_{\mathbb{R}^n}\tilde{h}(y)u(x-y)dy,$$
$$\text{and}~~\Delta_{j} u:=\varphi\left(2^{-j} D\right) u=2^{jn}\int_{\mathbb{R}^n}h(2^{j}y)u(x-y)dy  ~~\text{if}~~ j \geq 0.
$$
The nonhomogeneous low-frequency cut-off operator $S_j$ is defined by
$$
S_{j}u:= \sum_{k\leq j-1}\Delta_{k}u.$$
The homogeneous dyadic blocks $\dot{\Delta}_{j}$ and homogeneous low-frequency cut-off operators $\dot{S}_j$ are  defined  for every $j\in\mathbb{Z}$ by
\begin{equation*}
  \dot{\Delta}_{j}u:= \varphi(2^{-j}D)u=2^{jn}\int_{\mathbb{R}^n}h(2^{j}y)u(x-y)dy,
\end{equation*}
$$ \text { and }~~ \dot{S}_{j}u:=\varrho(2^{-j}D)u=2^{jn}\int_{\mathbb{R}^n}\tilde{h}(2^{j}y)u(x-y)dy.$$
Then for $-\infty <s<\infty$ and $1\leq p,q\leq \infty,$ the homogeneous Besov semi-norm $ \|f\|_{\dot{B}^{s}_{p, q}}$ of $f\in \mathcal{S}'/\mathcal{P}$ is given by
\begin{equation*}
	\begin{aligned}
  \norm{f}_{\dot{B}^{s}_{p, q}}:=\left\{\begin{array}{lll}\left(\sum_{j\in \mathbb{Z}}2^{jqs}\norm{\dot{\Delta}_{j} f} _{L^p}^q\right)^{1/q},~~\text{if}\ q\in [1,\infty),\\
  	\sup_{j\in \mathbb{Z}}2^{js}\norm {\dot{\Delta}_{j}f} _{L^p},~~~~~~~~~\text{if}~q=\infty.\end{array}\right.
\end{aligned}\end{equation*}
Moreover, for $s\in\mathbb{R}$ and $1\leq p,q\leq \infty$, we define the nonhomogeneous Besov norm $\norm{f}_{B^s_{p,q}}$ of $f\in \mathcal{S}^{'}$ as
$$\norm{f}_{B^s_{p,q}}=\norm{f}_{{L^p}}+\norm{f}_{\dot{B}^s_{p,q}}.$$
Motivated by \cite{[CCFS]}, we denote $\dot{B}^\alpha _{p,c(\mathbb{N})}$ with $\alpha\in\mathbb{R}$ and $1\leq p \leq \infty$ as the class of all tempered distributions $f$ satisfying
\begin{equation}\label{2.1}
\norm{f}_{\dot{B}^\alpha _{p,\infty}}<\infty~ \text{and}~ 	\lim_{j\rightarrow \infty} 2^{j\alpha}\norm{\dot{\Delta}_j f}_{L^p}=0.
\end{equation}
 The Littlewood-Paley decomposition and Besov space for the periodic domain $\mathbb{T}^{n}$ can be found in \cite{[DHWX]}. In addition, one can also define the homogeneous Besov space with positive indices  in terms of finite differences. For the convenience of readers, we give the detail on periodic domain.   For $1\leq q\leq \infty$ and $0<\alpha<1$, the homogeneous Besov space $\dot{B}^{\alpha}_{q,\infty}(\mathbb{T}^{n})$ is the space of functions $f$ on the $n$-dimensional torus $\mathbb{T}^{n}=[0,1]^{n}$ equipped with the semi-norm
 \be\label{besov1} \|f\|_{\dot{B}^{\alpha}_{q,\infty}(\mathbb{T}^{n})}= \B\|\,|y|^{-\alpha}\B\| f(x-y)-f(x)\B\|_{L_{x}^{q}(\mathbb{T}^{n})}\B\|_{L_{y}^{\infty}(\mathbb{R}^{n})}<\infty,\ee
 and the nonhomogeneous Besov space $B^{\alpha}_{q,\infty}(\mathbb{T}^{n})$ is the set of functions $f\in L^{q}(\mathbb{T}^{n})$ equipped with the norm
 $$\ba
  \|f\|_{B^{\alpha}_{q,\infty}(\mathbb{T}^{n})}= \|f\|_{L^{q}(\mathbb{T}^{n})}
  +\|f\|_{\dot{B}^{\alpha}_{q,\infty}(\mathbb{T}^{n})}<\infty.\ea$$

In the usual manner, we will use $C$ to denote the absolute constants which may be different from line to line unless otherwise stated.

 \begin{definition}\label{eulerdefi}
	A vector field $v\in C_{\text{weak}}([0,T];L^{2}(\Omega)$ is called a weak solution of the incompressible homogeneous Euler equations with initial data $v_{0}\in L^{2}(\Omega)$ if $v$  satisfies \eqref{Euler} in the sense of distributions, namely
	\begin{enumerate}[(i)]
		\item  for any divergence free test function $\varphi\in C_{0}^{\infty}([0,T];C^{\infty}(\Omega))$, there holds
		$$
		\int_{\Omega}[v(x,T) \varphi(x,T)- v(x,0) \varphi(x,0)]\,dx =\int_{0}^{T}\int_{\Omega}v(x,t)(\partial_{t}\varphi(x,t)+v(x,t)\cdot\nabla\varphi(x,t))\,dxdt,
		$$
		\item[(ii)]
		$v$ is weakly divergence free, that is, for every test function $\psi\in C_{0}^{\infty}([0,T)\times\Omega)$,
		$$\int_0^T\int_{\Omega} v(x,t)\cdot\nabla\psi(x,t)\,dxdt=0.$$
		
	\end{enumerate}
\end{definition}

\begin{definition}
We call the pair $(\rho,v)$ a weak solution of the incompressible nonhomogeneous Euler equations with initial data
$\sqrt{\rho_0}v_{0}\in L^{2}(\Omega)$ if the equations \eqref{NEuler} are  satisfied in the sense of distributions and
\begin{equation}\label{inequa}
 \frac{1}{2}\int_{\Omega}\rho(x,t)|v(x,t)|^{2}\,dx\leq \frac{1}{2}\int_{\Omega}\rho(x,0)|v(x,0)|^{2}\,dx.
\end{equation}
\end{definition}

In the absence of density or vacuum, to establish the energy equality of \eqref{Euler} or \eqref{NEuler}, it suffices to use the mollifier in the spatial direction. From this, we are in the position to introduce its definition. Let $\eta$ be a non negative smooth function only supported in the space ball of radius 1 and its integral equals to 1. For $\varepsilon>0$, we define the rescaled mollifier $\eta_{\varepsilon}(x)=\frac{1}{\varepsilon^n}\eta(\frac{x}{\varepsilon})$. Naturally, for any function $f\in L^1_{\rm loc}(\Omega)$, we can define its mollified version by
$$f^\varepsilon(x)=(f*\eta_{\varepsilon})(x)=\int_{\Omega}f(x-y)\eta_{\varepsilon}(y)dy,\ \ x\in \Omega_\varepsilon,$$
where $\Omega\subset{\mathbb{R}}^{n}$ is a bounded domain and $\Omega_\varepsilon=\{x\in \Omega: d(x,\partial\Omega)>\varepsilon\}.$ By the way, in the proof of Theorem \ref{the1.1} and \ref{the1.2}, the following two lemmas on mollifying kernel will be frequently used. We list them below for for readers' convenience.
\begin{lemma} \label{lem2.2} (\cite{[WY]})  Let $1\leq p,q,p_1,p_2,q_1,q_2\leq \infty$  with $\frac{1}{p}=\frac{1}{p_1}+\frac{1}{p_2}$ and $\frac{1}{q}=\frac{1}{q_1}+\frac{1}{q_2}$. Assume $f\in L^{p_1}(0,T;W^{1,q_1}(\mathbb{T}^n))$ and $g\in L^{p_2}(0,T;L^{q_2}(\mathbb{T}^n))$, then for any $\varepsilon> 0$, there holds
		\begin{align} \label{fg'}
		\|(fg)^\varepsilon-f^\varepsilon g^\varepsilon\|_{L^p(0,T;L^q(\mathbb{T}^n))}\leq C\varepsilon \|f\|_{L^{p_1}(0,T;W^{1,q_1}( \mathbb{T}^n))}\|g\|_{L^{p_2}(0,T;L^{q_2}(\mathbb{T}^n))}.
		\end{align}
		Moreover, if $p_2,q_2<\infty$, then
		\begin{align}\label{limite'}
		\limsup_{\varepsilon \to 0}\varepsilon^{-1} \|(fg)^\varepsilon-f^\varepsilon g^\varepsilon\|_{L^p(0,T;L^q(\mathbb{T}^n))}=0.
		\end{align}
	\end{lemma}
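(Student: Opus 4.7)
I would begin by establishing the Constantin--E--Titi type algebraic identity
\[
(fg)^{\varepsilon}(x) - f^{\varepsilon}(x)\,g^{\varepsilon}(x) = \int_{\mathbb{R}^n}\eta_{\varepsilon}(y)\bigl[f(x-y)-f(x)\bigr]\bigl[g(x-y)-g(x)\bigr]\,dy - \bigl(f(x)-f^{\varepsilon}(x)\bigr)\bigl(g(x)-g^{\varepsilon}(x)\bigr),
\]
which follows by expanding the right-hand side and using $\int\eta_{\varepsilon}=1$. This decomposes the commutator into a symmetric ``double increment'' piece and a lower-order correction, and both can be estimated with the same basic ingredients.

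For the symmetric piece, the plan is to apply Minkowski's integral inequality to move the $L^{p}_{t}L^{q}_{x}$ norm inside the $dy$-integral, and then use H\"older in space with $\tfrac{1}{q}=\tfrac{1}{q_1}+\tfrac{1}{q_2}$ and in time with $\tfrac{1}{p}=\tfrac{1}{p_1}+\tfrac{1}{p_2}$, so that the space--time norm is controlled by $\int\eta_{\varepsilon}(y)\,\|f(\cdot-y)-f\|_{L^{p_1}(L^{q_1})}\,\|g(\cdot-y)-g\|_{L^{p_2}(L^{q_2})}\,dy$. The gain of $\varepsilon$ is then extracted from $f$ alone via the standard bound $\|f(\cdot-y)-f\|_{L^{q_1}}\leq |y|\,\|\nabla f\|_{L^{q_1}}$ (which uses the fundamental theorem of calculus together with the density of $C^{\infty}$ in $W^{1,q_1}$), while $g$ is merely estimated by $2\|g\|_{L^{p_2}(L^{q_2})}$; since $|y|\leq\varepsilon$ on the support of $\eta_{\varepsilon}$, this produces the desired $O(\varepsilon)$ bound. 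The correction term is handled by the classical mollifier estimate $\|f-f^{\varepsilon}\|_{L^{q_1}}\leq C\varepsilon\|\nabla f\|_{L^{q_1}}$ combined with $\|g-g^{\varepsilon}\|_{L^{q_2}}\leq 2\|g\|_{L^{q_2}}$ and H\"older in time. Summing the two contributions yields \eqref{fg'}.

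For the stronger statement \eqref{limite'}, the key observation is that when $p_2,q_2<\infty$ translation is continuous in $L^{p_2}(0,T;L^{q_2})$, so $\|g(\cdot-y)-g(\cdot)\|_{L^{p_2}(L^{q_2})}\to 0$ as $|y|\to 0$. Replacing the crude bound $2\|g\|_{L^{p_2}(L^{q_2})}$ in the estimate above by this vanishing quantity, and analogously using $\|g-g^{\varepsilon}\|_{L^{p_2}(L^{q_2})}\to 0$, upgrades $O(\varepsilon)$ to $o(\varepsilon)$. The main obstacle, modest though it is, lies precisely here: one must justify the joint space--time translation continuity from the separate integrability hypotheses. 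The way I would do this is to use the pointwise-in-$t$ continuity of translations in $L^{q_2}_{x}$ (valid for $q_2<\infty$), control the integrand by the $L^{p_2}_{t}$-integrable majorant $2\|g(\cdot,t)\|_{L^{q_2}_{x}}$, and invoke the dominated convergence theorem; this is exactly where the finiteness of both $p_2$ and $q_2$ is essential, and no further delicacy arises.
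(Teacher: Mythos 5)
Your argument is correct and complete: the Constantin--E--Titi decomposition of $(fg)^\varepsilon-f^\varepsilon g^\varepsilon$ into the double-increment integral plus $(f-f^\varepsilon)(g-g^\varepsilon)$, Minkowski and H\"older with the stated exponent splittings, the bound $\|f(\cdot-y)-f\|_{L^{q_1}}\leq |y|\,\|\nabla f\|_{L^{q_1}}$, and the dominated-convergence upgrade to $o(\varepsilon)$ when $p_2,q_2<\infty$ is exactly the standard proof of this lemma. The paper itself does not prove the statement --- it is quoted from \cite{[WY]} --- but your route coincides with the argument given there, so there is nothing to flag.
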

\begin{lemma}(\cite{[NNT]})\label{lem2.1}
Suppose that $f\in L^{p}(0,T;L^{q}(\mathbb{T}^{n}))$, then for any $\varepsilon>0$, it holds
\be\label{}
\|\nabla f^{\varepsilon}\|_{L^{p}(0,T;L^{q}(\mathbb{T}^{n}))}
\leq C\varepsilon^{-1}\| f\|_{L^{p}(0,T;L^{q}(\mathbb{T}^{n}))},
\ee
and if $p,q<\infty$,
$$
\limsup_{\varepsilon\rightarrow0} \varepsilon\|\nabla f^{\varepsilon}\|_{L^{p}(0,T;L^{q}(\mathbb{T}^{n}))}=0.
$$
Furthermore, if $0<c_{1}\leq g\leq c_{2}<\infty$, then for any $\varepsilon>0$,
\be\ba
\B\|\nabla \B(\f{f^{\varepsilon}}{g^{\varepsilon}}\B)\B\|_{L^{p}(0,T;L^{q}(\mathbb{T}^{n}))}\leq C
\varepsilon^{-1}\|f\|_{L^{p}(0,T;L^{q}(\mathbb{T}^{n}))},
\ea\ee
and if $p,q<\infty$,
\be
\limsup_{\varepsilon\rightarrow0} \varepsilon\B\|\nabla \B(\f{f^{\varepsilon}}{g^{\varepsilon}}\B)\B\|_{L^{p}(0,T;L^{q}(\mathbb{T}^{n}))}
=0.\ee
\end{lemma}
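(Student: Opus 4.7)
The plan is to prove the four assertions of the lemma in sequence. For the first estimate, I would write $\nabla f^{\varepsilon}=f\ast \nabla\eta_{\varepsilon}$ and exploit $\|\nabla\eta_{\varepsilon}\|_{L^{1}(\mathbb{R}^{n})}=\varepsilon^{-1}\|\nabla\eta\|_{L^{1}}$; Young's convolution inequality then gives $\|\nabla f^{\varepsilon}(\cdot,t)\|_{L^{q}}\le C\varepsilon^{-1}\|f(\cdot,t)\|_{L^{q}}$, and raising to the $p$-th power and integrating in $t$ yields the bound. For the third estimate (the quotient), the product rule gives
\[
\nabla\!\left(\frac{f^{\varepsilon}}{g^{\varepsilon}}\right)=\frac{\nabla f^{\varepsilon}}{g^{\varepsilon}}-\frac{f^{\varepsilon}\,\nabla g^{\varepsilon}}{(g^{\varepsilon})^{2}},
\]
and since $g^{\varepsilon}$ is a convex combination of the values of $g$, the bounds $c_{1}\le g^{\varepsilon}\le c_{2}$ transfer. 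Combining this with the first estimate for $f$, together with the pointwise bound $\|\nabla g^{\varepsilon}\|_{L^{\infty}}\le Cc_{2}\varepsilon^{-1}$ and $\|f^{\varepsilon}\|_{L^{p}L^{q}}\le \|f\|_{L^{p}L^{q}}$, delivers the third assertion.

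For the second assertion (the vanishing limit for $\nabla f^{\varepsilon}$ when $p,q<\infty$), the key identity is
\[
\varepsilon\nabla f^{\varepsilon}(x,t)=\int_{\mathbb{R}^{n}}\bigl(f(x-\varepsilon z,t)-f(x,t)\bigr)(\nabla\eta)(z)\,dz,
\]
valid since $\int\nabla\eta=0$. Minkowski's integral inequality in $z$ gives $\|\varepsilon\nabla f^{\varepsilon}(\cdot,t)\|_{L^{q}}\le\int\|f(\cdot-\varepsilon z,t)-f(\cdot,t)\|_{L^{q}}\,|\nabla\eta(z)|\,dz$. Because $q<\infty$, translation is continuous in $L^{q}$, so the inner norm tends to $0$ pointwise in $z$ as $\varepsilon\to0$ and is dominated by $2\|f(\cdot,t)\|_{L^{q}}\|\nabla\eta\|_{L^{1}}$; dominated convergence kills the integral for a.e. $t$. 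A second application of dominated convergence in $t$ (with the integrable dominant $C\|f(\cdot,t)\|_{L^{q}}$, using $p<\infty$) yields the $L^{p}(0,T;L^{q})$ limit.

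The main obstacle is the fourth assertion. The first piece $\varepsilon\|\nabla f^{\varepsilon}/g^{\varepsilon}\|_{L^{p}L^{q}}\le(1/c_{1})\,\varepsilon\|\nabla f^{\varepsilon}\|_{L^{p}L^{q}}$ vanishes by the second assertion, but for the second piece the crude bound $\|\nabla g^{\varepsilon}\|_{L^{\infty}}=O(\varepsilon^{-1})$ only gives boundedness of $\varepsilon\|f^{\varepsilon}\nabla g^{\varepsilon}/(g^{\varepsilon})^{2}\|_{L^{p}L^{q}}$, not decay. The fix is a smooth-approximation argument: for any $\delta>0$ choose $\varphi\in C^{\infty}([0,T]\times\mathbb{T}^{n})$ with $\|f-\varphi\|_{L^{p}L^{q}}<\delta$. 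The $(f-\varphi)$-contribution is bounded by $C\delta$ via the third assertion, while for the smooth piece one uses $\|\varphi^{\varepsilon}\|_{L^{\infty}}\le\|\varphi\|_{L^{\infty}}$ to extract
\[
\varepsilon\Bigl\|\frac{\varphi^{\varepsilon}\,\nabla g^{\varepsilon}}{(g^{\varepsilon})^{2}}\Bigr\|_{L^{p}L^{q}}\le\frac{\|\varphi\|_{L^{\infty}}}{c_{1}^{2}}\,\varepsilon\|\nabla g^{\varepsilon}\|_{L^{p}L^{q}},
\]
and the right-hand side vanishes as $\varepsilon\to0$ by the second assertion applied to $g$ itself---here we use that $g\in L^{\infty}((0,T)\times\mathbb{T}^{n})$ automatically lies in $L^{p}(0,T;L^{q}(\mathbb{T}^{n}))$ by finiteness of the domain, and $p,q<\infty$. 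Sending $\delta\to0$ completes the argument.
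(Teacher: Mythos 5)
The paper does not prove this lemma; it is quoted from the reference [NNT] without proof, so there is no in-paper argument to compare against. Your proposal is a correct, self-contained proof along the standard lines: Young's inequality with $\|\nabla\eta_{\varepsilon}\|_{L^{1}}=\varepsilon^{-1}\|\nabla\eta\|_{L^{1}}$ for the first bound; the zero-mean identity $\varepsilon\nabla f^{\varepsilon}(x,t)=\int\bigl(f(x-\varepsilon z,t)-f(x,t)\bigr)\nabla\eta(z)\,dz$ plus continuity of translation and two applications of dominated convergence for the vanishing limit; the quotient rule together with $c_{1}\le g^{\varepsilon}\le c_{2}$ for the third bound. You also correctly isolate the only genuinely delicate point, namely that in the fourth assertion the term $\varepsilon f^{\varepsilon}\nabla g^{\varepsilon}/(g^{\varepsilon})^{2}$ is merely bounded by the crude estimates, and your fix --- splitting $f=\varphi+(f-\varphi)$ with $\varphi$ smooth (or merely bounded), absorbing the remainder into an $O(\delta)$ term via the uniform bound, and killing the smooth piece by applying the second assertion to $g\in L^{\infty}\subset L^{p}(0,T;L^{q}(\mathbb{T}^{n}))$ --- is sound. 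I see no gap; the argument is complete.
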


When the density allows vacuum, different from before, we need the space-time mollifier in the process of establishing energy conservation. To this end, we first introduce a non-negative even smooth function $\eta$ supported in the space-time ball of radius 1 with its integral equalling to 1. On this basis, we define the rescaled space-time mollifier  $\eta_{\varepsilon}(t,x)=\frac{1}{\varepsilon^{n+1}}\eta(\frac{t}{\varepsilon},\frac{x}{\varepsilon})$ and then
$$
g^{\varepsilon}(t,x)=\int_{0}^{t}\int_{\Omega}g(s,y)\eta_{\varepsilon}(t-s,x-y)\,dyds
$$
for any function $g\in L^1_{\rm loc}([0,T)\times\Omega)$. Likewise, in the process to prove Theorem \ref{the1.3}, the following Lion's type commutators on space-time mollifying kernel will be used.
\begin{lemma}
	\label{pLions}Let $1\leq p,q,p_1,q_1,p_2,q_2\leq \infty$,  with $\frac{1}{p}=\frac{1}{p_1}+\frac{1}{p_2}$ and $\frac{1}{q}=\frac{1}{q_1}+\frac{1}{q_2}$.
	Let $\partial$ be a partial derivative in space or time, and in addition $\partial_t f,\ \nabla f \in L^{p_1}(0,T;L^{q_1}(\mathbb{T}^n))$, $g\in L^{p_2}(0,T;L^{q_2}(\mathbb{T}^n))$.   Then, there holds $$\|{\partial(fg)^\varepsilon}-\partial(f\,{g}^\varepsilon)\|_{L^p(0,T;L^q(\mathbb{T}^n))}\leq C\left(\|\partial_{t} f\|_{L^{p_{1}}(0,T;L^{q_{1}}(\mathbb{T}^n))}+\|\nabla f\|_{L^{p_{1}}(0,T;L^{q_{1}}(\mathbb{T}^n))}\right)\|g\|_{L^{p_{2}}(0,T;L^{q_{2}}(\mathbb{T}^n))},
	$$
	for some constant $C>0$ independent of $\varepsilon$, $f$ and $g$. Moreover, $${\partial{(fg)^\varepsilon}}-\partial{(f\,{g^\varepsilon})}\to 0\quad\text{ in } {L^{p}(0,T;L^{q}(\mathbb{T}^n))},$$
	as $\varepsilon\to 0$ if $p_2,q_2<\infty.$
\end{lemma}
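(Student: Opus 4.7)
The plan is to reduce the claim to a bound on the Lions-type commutator
\[r_\varepsilon(f,g):=(fg)^\varepsilon-f\,g^\varepsilon,\]
since by linearity the left-hand side equals $\partial r_\varepsilon(f,g)$. Performing the change of variables $\tau=t-s$, $z=x-y$, I first record the identity
\[r_\varepsilon(f,g)(t,x)=\iint \eta_\varepsilon(\tau,z)\bigl[f(t-\tau,x-z)-f(t,x)\bigr]g(t-\tau,x-z)\,dz\,d\tau.\]
The key structural observation is that, when acting on the translated argument $(t-\tau,x-z)$, differentiation in $(t,x)$ equals minus differentiation in $(\tau,z)$. This lets me shift the outer $\partial$ off $g$, which carries no assumed regularity, onto either $f$ or $\eta_\varepsilon$.

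To execute the calculation, apply $\partial_{x_i}$ (the case $\partial_t$ is identical after swapping $\partial_{z_i}$ for $\partial_\tau$). This produces the term $r_\varepsilon(\partial_i f,g)$ plus a term of the form
\[\iint\eta_\varepsilon(\tau,z)\bigl[f(t-\tau,x-z)-f(t,x)\bigr](\partial_i g)(t-\tau,x-z)\,dz\,d\tau,\]
in which I rewrite $\partial_{x_i}g(t-\tau,x-z)=-\partial_{z_i}g(t-\tau,x-z)$ and integrate by parts in $z$ (no boundary contribution on $\mathbb{T}^n$). After rearrangement, every surviving summand is either of the form $\iint \eta_\varepsilon\,(\nabla f)(\cdots)g(\cdots)\,dz\,d\tau$, which is already controlled by $\|\nabla f\|\,\|g\|$ via Young's inequality, or of the form
\[\iint K_\varepsilon(\tau,z)\bigl[f(t-\tau,x-z)-f(t,x)\bigr]g(t-\tau,x-z)\,dz\,d\tau,\]
with $K_\varepsilon\in\bigl\{\eta_\varepsilon,\partial_z\eta_\varepsilon\bigr\}$ satisfying $\|K_\varepsilon\|_{L^1(\mathbb{R}^{n+1})}\lesssim\varepsilon^{-1}$ in the worst case.

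To close the estimate I invoke the fundamental theorem of calculus,
\[|f(t-\tau,x-z)-f(t,x)|\le\varepsilon\int_0^1\bigl(|\partial_t f|+|\nabla f|\bigr)(t-s\tau,x-sz)\,ds,\]
valid because $\eta_\varepsilon$ is supported where $|\tau|,|z|\le\varepsilon$. The explicit $\varepsilon$ cancels the $\varepsilon^{-1}$ coming from $K_\varepsilon$; viewing the result as a convolution in $(\tau,z)$ (with an auxiliary $s$-integral) and applying Minkowski together with Young's inequality under the hypothesized scaling $\tfrac{1}{p}=\tfrac{1}{p_1}+\tfrac{1}{p_2}$ and $\tfrac{1}{q}=\tfrac{1}{q_1}+\tfrac{1}{q_2}$ yields the desired quantitative bound. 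For the convergence statement under $p_2,q_2<\infty$, I would run a standard density argument: approximate $g$ by a smooth $g_\delta$ with $\|g-g_\delta\|_{L^{p_2}(0,T;L^{q_2}(\mathbb{T}^n))}<\delta$, control the residual via the quantitative bound just established, and apply dominated convergence to the explicit integral representation with smooth $g_\delta$ as $\varepsilon\to 0$. The main technical subtlety I anticipate is the one-sided nature of the time convolution in the paper's definition of $(\cdot)^\varepsilon$: differentiating in $t$ formally produces a boundary contribution at $s=t$, but that contribution is either absent on the interior slice $t\in(\varepsilon,T-\varepsilon)$ relevant to the application of Theorem \ref{the1.3}, or can be absorbed using the evenness of $\eta$.
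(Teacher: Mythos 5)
The paper states this lemma without proof --- it is quoted as a known ``Lions type'' commutator estimate in the DiPerna--Lions tradition --- so there is no in-paper argument to compare against; what you have written is essentially the standard proof of that estimate. Your skeleton is the right one: represent the commutator as $\iint\eta_\varepsilon(\tau,z)\bigl[f(t-\tau,x-z)-f(t,x)\bigr]g(t-\tau,x-z)\,dz\,d\tau$, arrange for the derivative to land on the kernel so that $\|\partial\eta_\varepsilon\|_{L^1}\lesssim\varepsilon^{-1}$ is compensated by the fundamental-theorem bound $|f(t-\tau,x-z)-f(t,x)|\lesssim\varepsilon\int_0^1(|\partial_tf|+|\nabla f|)(t-s\tau,x-sz)\,ds$ on the support of $\eta_\varepsilon$, close with Minkowski, translation invariance and H\"older under $\tfrac1p=\tfrac1{p_1}+\tfrac1{p_2}$, $\tfrac1q=\tfrac1{q_1}+\tfrac1{q_2}$, and obtain the limit statement from the uniform bilinear bound together with convergence on the dense class of smooth $g$ (for which $\partial(fg_\delta)^\varepsilon$ and $\partial(fg_\delta^\varepsilon)$ both tend to $g_\delta\partial f+f\partial g_\delta$ in $L^pL^q$). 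This is correct in outline.

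Two caveats are worth recording. First, your intermediate step produces $(\partial_ig)(t-\tau,x-z)$ before integrating by parts to remove it; since $g$ is only assumed to lie in $L^{p_2}(0,T;L^{q_2})$, this manipulation is not legitimate as written. Either perform the computation first for smooth $g$ and pass to the limit (for fixed $\varepsilon$ both sides depend continuously on $g$), or --- more cleanly --- differentiate the representation $(fg)^\varepsilon(t,x)=\iint\eta_\varepsilon(t-s,x-y)(fg)(s,y)\,dy\,ds$ so that $\partial$ falls directly on $\eta_\varepsilon$ and $g$ is never differentiated at any stage. Second, your treatment of the one-sided time integral is not quite right: differentiating $\int_0^t$ in $t$ produces the boundary contribution $\int_{\mathbb{T}^n}\eta_\varepsilon(0,x-y)(fg)(t,y)\,dy$ and its counterpart from $f\,g^\varepsilon$, and these are present for \emph{every} $t$, not only near the endpoints; evenness of $\eta$ does not make them vanish. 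What saves the estimate is that the two boundary terms combine into the commutator $\int_{\mathbb{T}^n}\eta_\varepsilon(0,x-y)\bigl[f(t,y)-f(t,x)\bigr]g(t,y)\,dy$, whose kernel has $L^1_x$ mass of order $\varepsilon^{-1}$ and which therefore obeys the same bound by the same difference-quotient argument; alternatively, one reads the definition as a genuine two-sided space-time convolution restricted to $t\in(\varepsilon,T-\varepsilon)$, which is how it is actually used in the proof of Theorem 1.3. Neither point is fatal, but both must be addressed for the proof to be complete.
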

The following  generalized Aubin-Lions Lemma  will be used to extend the energy equality up to the initial time.
\begin{lemma}[\cite{[Simon]}]\label{AL}
	Let $X\hookrightarrow B\hookrightarrow Y$ be three Banach spaces with compact imbedding $X \hookrightarrow\hookrightarrow Y$. Further, let there exist $0<\theta <1$ and $M>0$ such that
	\begin{equation}\label{le1}
	\|v\|_{B}\leq M\|v\|_{X}^{1-\theta}\|v\|_{Y}^\theta\ \ for\ all\ v\in X\cap Y.\end{equation}
Denote for $T>0$,
\begin{equation}\label{le2}
	W(0,T):=W^{s_0,r_0}((0,T), X)\cap W^{s_1,r_1}((0,T),Y)
\end{equation}
with
\begin{equation}\label{le3}
	\begin{aligned}
		&s_0,s_1 \in \mathbb{R}; \ r_0, r_1\in [1,\infty],\\
		s_\theta :=(1-\theta)s_0&+\theta s_1,\ \f{1}{r_\theta}:=\f{1-\theta}{r_0}+\f{\theta}{r_1},\ s^{*}:=s_\theta -\f{1}{r_\theta}.
	\end{aligned}
\end{equation}
Assume that $s_\theta>0$ and $F$ is a bounded set in $W(0,T)$. Then, we have

If $s_{*}\leq 0$, then $F$ is relatively compact in $L^p((0,T),B)$ for all $1\leq p< p^{*}:=-\f{1}{s^{*}}$.

If $s_{*}> 0$, then $F$ is relatively compact in $C((0,T),B)$.

\end{lemma}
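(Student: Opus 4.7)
The plan is to prove the lemma via a two-step scheme: first upgrade the pointwise interpolation inequality \eqref{le1} to an interpolation inequality for Sobolev--Bochner norms in time, then combine this with the Kolmogorov--Riesz--Fr\'echet compactness criterion, using the compact embedding $X\hookrightarrow\hookrightarrow Y$ to produce the required pointwise-in-$t$ compactness in $B$. This is the classical Simon strategy; the work is to track the fractional time-indices carefully.

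First I would establish the time-interpolation estimate
$$
\|v\|_{W^{s_\theta,r_\theta}((0,T);B)} \le C\,\|v\|_{W^{s_0,r_0}((0,T);X)}^{1-\theta}\,\|v\|_{W^{s_1,r_1}((0,T);Y)}^{\theta}
$$
for every $v\in W(0,T)$. For the Lebesgue part, this follows by applying \eqref{le1} pointwise in $t$ and then invoking H\"older's inequality with exponents $r_0/((1-\theta)r_\theta)$ and $r_1/(\theta r_\theta)$, which is well-defined thanks to the identity $1/r_\theta=(1-\theta)/r_0+\theta/r_1$. For the Gagliardo seminorm, one inserts \eqref{le1} into the integrand $\|v(t)-v(s)\|_B/|t-s|^{s_\theta+1/r_\theta}$ and splits the exponent using $s_\theta=(1-\theta)s_0+\theta s_1$, then a second H\"older yields the product. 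Since $s_\theta>0$, the one-dimensional Sobolev embeddings
$$
W^{s_\theta,r_\theta}((0,T);B)\hookrightarrow L^{p^{*}}((0,T);B)\ \text{if } s^{*}\le 0,\qquad W^{s_\theta,r_\theta}((0,T);B)\hookrightarrow C([0,T];B)\ \text{if } s^{*}>0,
$$
already identify the correct target space, although only with boundedness rather than compactness.

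To upgrade boundedness to relative compactness I would invoke \emph{Lions' trick}: by the compact embedding $X\hookrightarrow\hookrightarrow Y$, any bounded sequence in $X$ contains a subsequence convergent in $Y$, and then \eqref{le1} promotes this immediately to strong convergence in $B$, since $\|u_n-u\|_B\le C\|u_n-u\|_X^{1-\theta}\|u_n-u\|_Y^{\theta}$ with $\|u_n-u\|_X$ bounded. Applied along a countable dense set of times and combined with an equicontinuity estimate $\|v_n(\cdot+h)-v_n(\cdot)\|_{L^{r_\theta}(0,T-h;Y)}\to 0$ as $h\to 0$ uniformly in $n$ (which again follows from the same interpolation argument, with $B$ replaced by $Y$, applied to the fractional regularity in $W^{s_\theta,r_\theta}((0,T);Y)$), the Kolmogorov--Riesz--Fr\'echet criterion gives relative compactness in $L^p((0,T);B)$ for $p<p^{*}$ when $s^{*}\le 0$, whereas Arzel\`a--Ascoli delivers relative compactness in $C([0,T];B)$ when $s^{*}>0$.

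The hard part will be the case where $s_0$ or $s_1$ is negative, as permitted by the hypothesis $s_i\in\mathbb{R}$. The Gagliardo-type interpolation used above does not apply directly to Sobolev--Bochner spaces of negative order, and one must instead realize $W^{s_i,r_i}$ as duals (or as real-interpolation spaces obtained via the $K$-method) and prove the analogue of the compactness statement by duality, transferring the compactness of $X\hookrightarrow\hookrightarrow Y$ to compactness of the dual embedding $Y^{*}\hookrightarrow\hookrightarrow X^{*}$. This is a delicate bookkeeping step, but the conclusion still rests on the same two ingredients: the fractional time-interpolation inequality and the compactness of $X\hookrightarrow\hookrightarrow Y$.
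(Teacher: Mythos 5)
The paper itself offers no proof of this lemma: it is quoted from Simon \cite{[Simon]} (Corollary 9 there) and used as a black box, so there is no in-paper argument to compare against. Your sketch does follow the standard Simon strategy --- interpolate the time indices via \eqref{le1} and H\"older, use the compact embedding $X\hookrightarrow\hookrightarrow Y$ together with \eqref{le1} to upgrade $X$-boundedness plus $Y$-convergence to $B$-convergence, then invoke a compactness criterion in $L^p((0,T),B)$ or $C((0,T),B)$ --- and the exponent bookkeeping in your H\"older pairing $r_0/((1-\theta)r_\theta)$, $r_1/(\theta r_\theta)$ is correct.

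Two steps are genuinely gapped. First, in the case $s^{*}\leq 0$ the elements of $F$ are merely $L^{p}$-in-time with values in $B$, so ``applying Lions' trick along a countable dense set of times'' is not meaningful: pointwise values $v_n(t)$ are not defined. The vector-valued Kolmogorov--Riesz--Fr\'echet/Simon criterion requires instead that the averaged sets $\{\int_{t_1}^{t_2}f(t)\,dt:\ f\in F\}$ be relatively compact in $B$ for all $0<t_1<t_2<T$; these averages lie in a bounded set of $X$ (at least for $s_0\geq 0$; for $s_0<0$ even this needs an argument), after which $X\hookrightarrow\hookrightarrow B$ applies. Relatedly, the compactness upgrade should be run on differences, $\|u_n-u_m\|_{B}\leq M\|u_n-u_m\|_{X}^{1-\theta}\|u_n-u_m\|_{Y}^{\theta}$, since the $Y$-limit need not a priori belong to $X$. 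Second, the Gagliardo-seminorm realization of $W^{s,r}$ confines your interpolation step to $s_0,s_1\in(0,1)$, while the lemma is stated for all $s_0,s_1\in\mathbb{R}$; your final paragraph concedes this, but the duality/real-interpolation route is only gestured at, so as written you prove a strict sub-case. Note that the paper's own application of the lemma (to $\sqrt{\rho}$ with $s_0=0$, $r_0=\infty$ and $s_1=1$, $r_1=3$) already sits at the integer endpoints, where one should replace the Gagliardo seminorm by the elementary translation estimates $\|f(\cdot+h)-f\|_{L^{r_1}(Y)}\leq h\|\partial_t f\|_{L^{r_1}(Y)}$ and $\|f(\cdot+h)-f\|_{L^{\infty}(X)}\leq 2\|f\|_{L^{\infty}(X)}$ before interpolating; the genuinely delicate case is $s_i<0$. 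Finally, to obtain relative compactness in $L^{p}$ for every $p<p^{*}$ rather than only in $L^{r_\theta}$, you should add the routine step of interpolating compactness in $L^{1}((0,T),B)$ against boundedness in $L^{p^{*}}((0,T),B)$.
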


\section{Incompressible homogeneous Euler equations}
In this section, we are concerned with energy conservation  class involving the  vorticity for the incompressible homogeneous Euler equations. For the periodic domains and whole spaces, we will use different methods to deal with them separately, i.e., the standard mollifier as \cite{[CET]} and Littlewood-Paley theory.

\begin{proof}[Proof of Theorem \ref{the1.1}]\no{\bf Periodic domain:} In this case, a key point is  to verify that $v\in L^{3}(0,T; L^{\f{3n}{n-1}}(\mathbb{T}^{n}))$. To this end, we notice that the Calder\'on-Zygmund theorem leads to
$$\|\nabla v\|_{L^{3}(0,T;L^{\f{3n}{n+2}}(\mathbb{T}^{n}))}\leq C\|\omega \|_{L^{3}(0,T;L^{\f{3n}{n+2}}(\mathbb{T}^{n}))},$$
which implies, after applying the triangle  inequality, Poincar\'e-Sobolev inequality and Gagliardo-Nirenberg inequality, that
$$\ba
\|v\|_{L^{\f{3n}{n-1}}(\mathbb{T}^{n})}\leq& \|v-\overline{v}\|_{L^{\f{3n}{n-1}}(\mathbb{T}^{n})}+   \| \overline{v}\|_{L^{\f{3n}{n-1}}(\mathbb{T}^{n})}\\
\leq& C\|\nabla v \|_{L^{\f{3n}{n+2}}(\mathbb{T}^{n})}+   C\| v\|_{L^{2}(\mathbb{T}^{n})}\\
\leq& C\|\omega \|_{L^{\f{3n}{n+2}}(\mathbb{T}^{n})}+   C\| v\|_{L^{2}(\mathbb{T}^{n})},
\ea
$$
where we have used $\|\nabla v \|_{L^{p}(\mathbb{T}^{n})}\leq C\|\omega \|_{L^{p}(\mathbb{T}^{n})}$ with $1<p<\infty$. Integrating the resulting inequality with respect to $t$ over $(0,T)$, it is clear that
\be\ba\label{key}
\|v\|_{L^{3}(0,T; L^{\f{3n}{n-1}}(\mathbb{T}^{n}))}
\leq& C\|\omega \|_{L^{3}(0,T;L^{\f{3n}{n+2}}(\mathbb{T}^{n}))}+   C\| v\|_{L^{\infty}(0,T;L^{2}(\mathbb{T}^{n}))}.
\ea
\ee

Subsequently, taking inner product of \eqref{Euler} with $(v^{\varepsilon})^{\varepsilon}$ and integrating over $(0,t)\times \mathbb{T}^{n}$, we infer that
\begin{equation}\label{euler1} \begin{aligned}
		\int_0^t\int_{\mathbb{T}^{n}}  \B[ ( v_{t})^{\varepsilon}+ \Div(  v\otimes v)^{\varepsilon}+\nabla \pi^\varepsilon \B]\cdot v^{\varepsilon}\,dxds=0,
\end{aligned}\end{equation}
where $v^\varepsilon$ means $v$ is mollified only in spatical direction. For the nonlinear term, the incompressible condition  div$\,v=0$ ensures
$$
\int_0^t\int_{\mathbb{T}^{n}}  \Div(  v\otimes v^{\varepsilon})\cdot v^{\varepsilon}\,dxds=0,~\text{and}~\int_0^t\int_{\mathbb{T}^n}\nabla \pi^\varepsilon\cdot v^\varepsilon dxds=0.
$$

As a result, we can rewrite \eqref{euler1}  as
\begin{equation}\label{reeuler}
	\begin{aligned}
	 \int_{\mathbb{T}^{n}}  {\frac{|v^{\varepsilon}(x,t)|^2}{2}}\,dx - \int_{\mathbb{T}^{n}} {\frac{|v(x,0)^{\varepsilon}|^2}{2}}\,dx
		=  \int_0^t\int_{\mathbb{T}^{n}}  [(  v\otimes v)^{\varepsilon}-v\otimes v^{\varepsilon}]\cdot  \nabla v^{\varepsilon}\,dx ds,
\end{aligned}\end{equation}
which concludes, after using H\"older inequality and triangle inequality, that
\begin{equation} \begin{aligned}\label{12}
 		&\left|\int_0^t\int_{\mathbb{T}^{n}}    [(  v\otimes v)^{\varepsilon}-v\otimes v^{\varepsilon}]\cdot \nabla v^{\varepsilon}\,dxds\right| \\
 		\leq &C\|\nabla v^\varepsilon\|_{L^{3}(0,T;L^{\f{3n}{n+2}}(\mathbb{T}^n))}\|( v\otimes v)^{\varepsilon}-v\otimes v^{\varepsilon}\|_{L^{\f{3}{2}}(0,T;L^{\f{3n}{2(n-1)}}(\mathbb{T}^n))}\\
 		\leq & C\|\nabla v\|_{L^{3}(0,T;L^{\f{3n}{n+2}}(\mathbb{T}^n))}\Big(\|(  v \otimes v)^\varepsilon-   v \otimes v\|_{L^{\f{3}{2}}(0,T;L^{\f{3n}{2(n-1)}}(\mathbb{T}^n))}\\
 &+\|  v\otimes v-   v\otimes v^\varepsilon\|_{L^{\f{3}{2}}(0,T;L^{\f{3n}{2(n-1)}}(\mathbb{T}^n))}\Big)\\
 		\leq & C\|\nabla v\|_{L^{3}(0,T;L^{\f{3n}{n+2}}(\mathbb{T}^n))} \Big(\|(  v \otimes v)^\varepsilon-   v \otimes v\|_{L^{\f{3}{2}}(0,T;L^{\f{3n}{2(n-1)}}(\mathbb{T}^n))}\\
 &+\| v\|_{L^3(L^{\f{3n}{n-1}}(\mathbb{T}^n))}\|v-v^\varepsilon\|_{L^3(0,T;L^{\f{3n}{n-1}}(\mathbb{T}^n))}\Big).
 \end{aligned}\end{equation}
 Thanks to \eqref{key} and standard properties of mollifier, we know that the right hand side of  \eqref{reeuler} becomes zero as $\varepsilon\rightarrow0$, which completes the proof of this case.

\no{\bf Whole space:}
In view of the Sobolev embedding and Calder\'on-Zygmund theorem, we have
$$\ba
\|v\|_{L^{\f{3n}{n-1}}(\mathbb{R}^{n})}
\leq  C\|\nabla v\|_{L^{\f{3n}{n+2}}(\mathbb{R}^{n})}\leq  C\|\omega \|_{L^{\f{3n}{n+2}}(\mathbb{R}^{n})}.
\ea
$$
According to the definition of weak solutions of the incompressible homogeneous Euler equations, it is clear that
\be\label{zheng1}
\f12\|S_{N}v\|_{L^{2}(\mathbb{R}^{n})}^{2} -\f12\|S_{N}v_{0}\|_{L^{2}(\mathbb{R}^{n})}^{2}=-\int_{0}^{t}\int_{\mathbb{R}^n}
\partial_{j}(v_{j}v_{i})S^{2}_{N}v_{i}\,dxds,
\ee
where $S_{N}v$ is defined in Section 2. To reformulate the right hand side of \eqref{zheng1}, as above, we derive from the divergence-free condition that
$$\int_{0}^{t}\int_{\mathbb{R}^n}
v_{j}\partial_{j}(S^{2}_{N}v_{i})^{2} \,dxds=0.$$
Consequently, we can rewrite the nonlinear term in \eqref{zheng1} as
\begin{equation}\label{s1}
\begin{split}
&-\int_{0}^{t}\int_{\mathbb{R}^n}
\partial_{j}(v_{j}v_{i})S^{2}_{N}v_{i}\,dxds\\
=&-\int_{0}^{t}\int_{\mathbb{R}^n}
\partial_{j}(v_{j}S^{2}_{N}v_{i})S^{2}_{N}v_{i}\,dxds-\int_{0}^{t}\int_{\mathbb{R}^n}
\partial_{j}[v_{j}(\text{I}_{\textnormal d}-S^{2}_{N})v_{i}]S^{2}_{N}v_{i}\,dxds\\
=&-\f12\int_{0}^{t}\int_{\mathbb{R}^n}
v_{j}\partial_{j}(S^{2}_{N}v_{i})^{2} \,dxds-\int_{0}^{t}\int_{\mathbb{R}^n}
\partial_{j}\big[v_{j}(\text{I}_{\textnormal d}-S^{2}_{N})v_{i}\big]S^{2}_{N}v_{i}\,dxds\\
=& \int_{0}^{t}\int_{\mathbb{R}^n}
S_{N}\big[v_{j}(\text{I}_{\textnormal d}-S^{2}_{N})v_{i}\big]S_{N}\partial_{j}v_{i}\,dxds,
\end{split}
\end{equation}
which arrives at, after using H\"older inequality,
\be\label{s2}\ba
&\left|\int_{0}^{t}\int_{\mathbb{R}^n}S_{N}\big[v_{j}(I-S^{2}_{N})v_{i}\big]S_{N}\partial_{j}v_{i}\,dxds\right| \\
\leq&\big\| S_{N}\big[v_{j}(\text{I}_{\textnormal d}-S^{2}_{N})v_{i}\big]\big\|_{L^{\f{3}{2}}(0,T;L^{\f{3n}{2(n-1)}}(\mathbb{R}^{n}))}  \big\| \nabla S_{N}v \big\|_{L^{3}(0,T;L^{\f{3n}{n+2}}(\mathbb{R}^{n}))}\\
\leq&  C\big\|  v_{j}(\text{I}_{\textnormal d}-S^{2}_{N})v_{i}\big\|_{L^{\f{3}{2}}(0,T;L^{\f{3n}{2(n-1)}}(\mathbb{R}^{n}))} \big\| S_{N}\partial_{j}v_{i} \big\|_{L^{3}(0,T;L^{\f{3n}{n+2}}(\mathbb{R}^{n}))}\\
\leq& C\|  v   \|_{L^3(0,T;L^{\f{3n}{n-1}}(\mathbb{R}^{n}))} \big\|\big(\text{I}_{\textnormal d}-S^2_{N}\big)v \big\|_{L^3(0,T;L^{\f{3n}{n-1}}(\mathbb{R}^{n}))}  \| \nabla  v \|_{L^{3}(0,T;L^{\f{3n}{n+2}}(\mathbb{R}^{n}))}.
\ea\ee
Thanks to Littlewood-Paley theory and Dominated Convergence Theorem, it follows that, as $N\rightarrow\infty$,
$$\big\|\big(\text{I}_{\textnormal d}-S^2_{N}\big)v \big\|_{L^3(0,T;L^{\f{3n}{n-1}}(\mathbb{R}^{n}))} \rightarrow0,
$$
which further implies, by \eqref{s1} and \eqref{s2}, that
$$
-\int_{0}^{t}\int_{\mathbb{R}^n}
\partial_{j}(v_{j}v_{i})S^{2}_{N}v_{i}\,dxds\rightarrow0.
$$
Finally, by taking $N\rightarrow\infty$ in \eqref{zheng1}, we conclude by \eqref{zheng1} that
$$ \|v(t)\|_{L^{2}(\mathbb{R}^{n})}^{2}= \|v_0\|_{L^{2}(\mathbb{R}^{n})}^{2},$$
which finish all the proof of this theorem.
\end{proof}

\section{ Incompressible nonhomogeneous Euler equations}

This section is devoted to establishing the energy conservation criterion for the incompressible nonhomogeneous Euler equations in terms of the vorticity.

\subsection{The case without vacuum}\label{sec4.1}
\begin{proof}[Proof of Theorem \ref{the1.2}]
Since $0<c_{1}\leq \rho\leq c_{2}<\infty$, similar with the model in Section 3, there still holds $v
\in L^{\infty}(0,T;L^{2}(\mathbb{T}^{n}))$ and $\nabla v\in L^{3}(0,T;L^{\f{3n}{n+2}}(\mathbb{T}^{n}))$. As the same manner of derivation of \eqref{key}, it is clear that \eqref{key01} implies $v\in L^{3}(0,T; L^{\f{3n}{n-1}}(\mathbb{T}^{n}))$. Thus, we can derive from H\"older inequality that
$v\in L^{3}(0,T;  L^{\f{3n}{n+2}}(\mathbb{T}^{n}))$. Combining this with $\nabla v\in L^{3}(0,T;L^{\f{3n}{n+2}}(\mathbb{T}^{n}))$, we obtain
\be\label{key2}
v\in L^{3}(0,T;W^{1,\f{3n}{n+2}}(\mathbb{T}^{n})).
\ee

Let $\phi(t)$ be a smooth function compactly supported in $(0,+\infty)$. We take inner product of $\eqref{NEuler}_2$ with $\left(\phi(t)\frac{(\rho v)^\varepsilon}{\rho^\varepsilon}\right)^\varepsilon$ and integrate it   over $(0,T)\times \mathbb{T}^d$ to obtain
\begin{equation}\label{nc1} \begin{aligned}
 \int_0^T\int_{\mathbb{T}^{n}} \B[\partial_{t}(\rho v)^{\varepsilon}+ \Div(\rho v\otimes v)^{\varepsilon}+\nabla \pi^\varepsilon\B]\cdot\phi(t)\f{(\rho v)^{\varepsilon}}{\rho^{\varepsilon}}\,dxdt=0,
 \end{aligned}\end{equation}
where $(\cdot)^\varepsilon$ means $(\cdot)$ is mollified only in spatical direction.

After several integration by parts, we arrive at
 \begin{equation}\label{c9}
 \begin{aligned}
& \f12\int_0^T\int_{\mathbb{T}^{n}}\phi(t)\partial_{t}\f{|(\rho v)^{\varepsilon}|^{2}}{\rho^{\varepsilon}}\,dxdt  \\
= &-\int_0^T\int_{\mathbb{T}^{n}}\phi(t) \B[\f{(\rho v)^{\varepsilon}}{\rho^{\varepsilon}} \B] \cdot\nabla\pi^{\varepsilon}\,dxdt
+\int_0^T\int_{\mathbb{T}^{n}}\phi(t) [(\rho v\otimes v)^{\varepsilon}-(\rho v)^{\varepsilon}\otimes v^{\varepsilon}]\cdot\nabla\B(\f{(\rho v)^{\varepsilon}}{\rho^{\varepsilon}} \B)\,dxdt\\& +\int_0^T\int_{\mathbb{T}^{n}}\phi(t)\B[ \rho^{\varepsilon}v^{\varepsilon}-(\rho v)^{\varepsilon} \B]\cdot\nabla\B(\f{(\rho v)^{\varepsilon}}{\rho^{\varepsilon}} \B)\cdot \f{(\rho v)^\varepsilon}
{\rho^{\varepsilon}}\,dxdt.\\
  \end{aligned}\end{equation}
 For the pressure term, the incompressible condition  div$\,v=0$ shows
 $$
 -\int_0^T\int_{\mathbb{T}^{n}} \phi(t)\frac{\rho^\varepsilon v^\varepsilon}{\rho^\varepsilon}\cdot\nabla\pi^\varepsilon \,dxdt=-\int_0^T\int_{\mathbb{T}^{n}}\phi(t) v^{\varepsilon} \cdot\nabla\pi^{\varepsilon}\,dxdt=0,
 $$
and hence
$$
-\int_0^T\int_{\mathbb{T}^{n}}\phi(t) \B[\f{(\rho v)^{\varepsilon}}{\rho^{\varepsilon}} \B] \cdot\nabla\pi^{\varepsilon}\,dxdt
=-\int_0^T\int_{\mathbb{T}^{n}}\phi(t) \B[\f{(\rho v)^{\varepsilon}-\rho^{\varepsilon}v^{\varepsilon}}{\rho^{\varepsilon}} \B] \cdot\nabla\pi^{\varepsilon}\,dxdt.
$$
Thanks to H\"older inequality, it is clear that
$$\ba
&\B|-\int_0^T\int_{\mathbb{T}^{n}}\phi(t) \B[\f{(\rho v)^{\varepsilon}-\rho^{\varepsilon}v^{\varepsilon}}{\rho^{\varepsilon}} \B] \cdot\nabla\pi^{\varepsilon}\,dxdt\B|\\
\leq& C\|\nabla\pi^{\varepsilon}\|_{L^{\f32}(0,T;L^{\f{3n}{2n-2}}(\mathbb{T}^{n}))} \|\f{(\rho v)^{\varepsilon}-\rho^{\varepsilon}v^{\varepsilon}}{\rho^{\varepsilon}}\|_{L^{3}(0,T;L^{\f{3n}{n+2}}(\mathbb{T}^{n}))}\\
\leq &  C  \|\nabla \pi^{\varepsilon}\|_{L^{\f32}(0,T;L^{\f{3n}{2n-2}}(\mathbb{T}^{n}))}       \| (\rho v)^{\varepsilon}-\rho^{\varepsilon}v^{\varepsilon}  \|_{L^{3}(0,T;L^{\f{3n}{n+2}}(\mathbb{T}^{n}))}.
\ea$$
Making use of Lemma \ref{lem2.2} and \ref{lem2.1}, we get
$$\ba
&\|(\rho v)^{\varepsilon}-\rho ^{\varepsilon}v^{\varepsilon}\|_{L^{3}(0,T;L^{\f{3n}{n+2}}(\mathbb{T}^{n}))}\leq \varepsilon\|v\|_{L^{3}(0,T;W^{1,\f{3n}{n+2}}(\mathbb{T}^{n}))}\|\rho  \|_{L^{\infty}(0,T;L^{\infty}(\mathbb{T}^{n}))},\\
&\limsup_{\varepsilon\rightarrow0}\varepsilon\|\nabla\pi^{\varepsilon}\|_{L^{\f32}(0,T;L^{\f{3n}{2n-2}}(\mathbb{T}^{n}))}  =0,
\ea$$
which implies
$$
  \limsup_{\varepsilon\rightarrow0}\B|-\int_0^T\int_{\mathbb{T}^{n}}\phi(t) \B[\f{(\rho v)^{\varepsilon}}{\rho^{\varepsilon}} \B] \cdot\nabla\pi^{\varepsilon}\,dxdt\B|=0.
$$

Now, we turn our attentions to the nonlinear term in \eqref{c9}.
Invoking  Lemma \ref{lem2.2} and \ref{lem2.1}, it follows that
\begin{equation}\label{nc17} \begin{aligned}
&\|(\rho v\otimes v)^{\varepsilon}-(\rho v)^{\varepsilon}\otimes v^{\varepsilon} \|_{L^{\f{3}{2}}(0,T;L^{\f{3n}{2n+1}}(\mathbb{T}^{n}))}\leq C\varepsilon\|v\|_{L^{3}(0,T;W^{1,\f{3n}{n+2}}(\mathbb{T}^{n})))}\|\rho v\|_{L^{3}(0,T;L^{\frac{3n}{n-1}}(\mathbb{T}^{n}))},\\
&\B\|\nabla\B(\f{(\rho v)^{\varepsilon}}{\rho^{\varepsilon}} \B)\B\|_{L^{3}(0,T;L^{\frac{3n}{n-1}}(\mathbb{T}^{n}))}\leq C\varepsilon^{-1}\|\rho v\|_{L^{3}(0,T;L^{\frac{3n}{n-1}}(\mathbb{T}^{n}))},\\ &\limsup_{\varepsilon\rightarrow0}\varepsilon\B\|\nabla\B(\f{(\rho v)^{\varepsilon}}{\rho^{\varepsilon}} \B)\B\|_{L^{3}(0,T;L^{\frac{3n}{n-1}}(\mathbb{T}^{n}))}=0.
\end{aligned}\end{equation}
Taking advantage of H\"older's inequality and Lemma \ref{lem2.2}, we have
  \begin{equation}\label{nc18}\begin{aligned}
 &\B|\int_0^T\int_{\mathbb{T}^{n}}\phi(t)[(\rho v\otimes v)^{\varepsilon}-(\rho v)^{\varepsilon}\otimes v^{\varepsilon}]\cdot\nabla\B(\f{(\rho v)^{\varepsilon}}{\rho^{\varepsilon}} \B)\,dxdt\B|\\
\leq & C\B\|\nabla\B(\f{(\rho v)^{\varepsilon}}{\rho^{\varepsilon}} \B)\B\|_{L^{3}(0,T;L^{\frac{3n}{n-1}}(\mathbb{T}^{n}))}\|(\rho v\otimes v)^{\varepsilon}-(\rho v)^{\varepsilon}\otimes v^{\varepsilon} \|_{L^{\f{3}{2}}(0,T;L^{\f{3n}{2n+1}}(\mathbb{T}^{n}))}\\
\leq & C\varepsilon\B\|\nabla\B(\f{(\rho v)^{\varepsilon}}{\rho^{\varepsilon}} \B)\B\|_{L^{3}(0,T;L^{\frac{3n}{n-1}}(\mathbb{T}^{n}))}\|v\|_{L^3(0,T;W^{1,\frac{3n}{n+2}}(\mathbb{T}^{n}))} \|\rho v\|_{L^{3}(0,T;L^{\frac{3n}{n-1}}(\mathbb{T}^{n}))}\\
\leq & C\varepsilon\B\|\nabla\B(\f{(\rho v)^{\varepsilon}}{\rho^{\varepsilon}} \B)\B\|_{L^{3}(0,T;L^{\frac{3n}{n-1}}(\mathbb{T}^{n}))}\|v\|_{L^3(0,T;W^{1,\frac{3n}{n+2}}(\mathbb{T}^{n}))} \| v\|_{L^{3}(0,T;L^{\frac{3n}{n-1}}(\mathbb{T}^{n}))}\\
\leq & C\varepsilon\B\|\nabla\B(\f{(\rho v)^{\varepsilon}}{\rho^{\varepsilon}} \B)\B\|_{L^{3}(0,T;L^{\frac{3n}{n-1}}(\mathbb{T}^{n}))},
 \end{aligned}\end{equation}
 which follows from \eqref{nc17} that
$$
\limsup_{\varepsilon\rightarrow0}\B|\int_0^T\int_{\mathbb{T}^{n}}\phi(t)[(\rho v\otimes v)^{\varepsilon}-(\rho v)^{\varepsilon}\otimes v^{\varepsilon}]\cdot\nabla\B(\f{(\rho v)^{\varepsilon}}{\rho^{\varepsilon}} \B)\,dxdt\B|=0.$$

Thus, it suffices to deal with the term $\int_0^T\int_{\mathbb{T}^{n}}\phi(t)\B[ \rho^{\varepsilon}v^{\varepsilon}-(\rho v)^{\varepsilon} \B]\cdot \nabla\f{(\rho v)^\varepsilon}{\rho^{\varepsilon}}\cdot\f{(\rho v)^\varepsilon}
{\rho^{\varepsilon}}\,dxdt$. From Lemma \ref{lem2.1} and $\rho v\in  L^{3}(0,T;L^{\frac{3n}{n-1}}(\mathbb{T}^{n}))$, there holds
\begin{equation}\label{1.3.15}
\limsup_{\varepsilon\rightarrow0}\varepsilon\B\|\nabla \left(\f{(\rho v)^\varepsilon}{\rho^{\varepsilon}}\right)\B\|_{ L^{3}(0,T;L^{\frac{3n}{n-1}}(\mathbb{T}^{n})) }
=0.
\end{equation}
In addition, from Lemma \ref{lem2.2}, it follows that
\begin{equation}\label{1.3.16}
\|\rho^{\varepsilon}v^{\varepsilon}-(\rho v)^{\varepsilon}\|_{L^{3}(0,T;L^{\f{3n}{n+2}}(\mathbb{T}^{n}))} \leq C\varepsilon\|v\|_{L^{3}(0,T;W^{1,\f{3n}{n+2}}(\mathbb{T}^{n}))}  \|\rho\|_{L^{\infty}(0,T;L^{\infty}(\mathbb{T}^{n}))}.
\end{equation}
Combining H\"older's inequality  and \eqref{1.3.16}, we derive
\begin{equation}\label{1.3.17} \begin{aligned}
&\B|\int_0^T\int_{\mathbb{T}^{n}}\phi(t)\B[ \rho^{\varepsilon}v^{\varepsilon}-(\rho v)^{\varepsilon} \B]\cdot\nabla \left(\f{(\rho v)^\varepsilon}{\rho^{\varepsilon}}\right)\cdot \f{(\rho v)^\varepsilon}
{\rho^{\varepsilon}}\,dxdt\B| \\
\leq&C  \|\rho^{\varepsilon}v^{\varepsilon}-(\rho v)^{\varepsilon}\|_{L^{3}(0,T;L^{\f{3n}{n+2}}(\mathbb{T}^{n}))}\B\|\f{(\rho v)^\varepsilon}
{\rho^{\varepsilon}}\B\|_{ L^{3}(0,T;L^{\frac{3n}{n-1}}(\mathbb{T}^{n})) }\B\|\nabla\left(\f{(\rho v)^\varepsilon}{\rho^{\varepsilon}}\right)\B\|_{ L^{3}(0,T;L^{\frac{3n}{n-1}}(\mathbb{T}^{n})) } \\
\leq& C\varepsilon \|v\|_{L^{3}(0,T;W^{1,\f{3n}{n+2}}(\mathbb{T}^{n}))} \|\rho\|_{L^{\infty}(0,T;L^{\infty}(\mathbb{T}^{n}))} \|v\|_{ L^{3}(0,T;L^{\frac{3n}{n-1}}(\mathbb{T}^{n})) }\B\|\nabla\left(\f{(\rho v)^\varepsilon}{\rho^{\varepsilon}}\right)\B
\|_{ L^{3}(0,T;L^{\frac{3n}{n-1}}(\mathbb{T}^{n})) }\\
\leq& C\varepsilon \B\|\nabla\left(\f{(\rho v)^\varepsilon}{\rho^{\varepsilon}}\right)\B
\|_{ L^{3}(0,T;L^{\frac{3n}{n-1}}(\mathbb{T}^{n})) },\\
\end{aligned}\end{equation}
which together with \eqref{1.3.15} implies that
$$
\limsup_{\varepsilon\rightarrow0}
\B|\int_0^T\int_{\mathbb{T}^{n}}\phi(t)\B[ \rho^{\varepsilon}v^{\varepsilon}-(\rho v)^{\varepsilon} \B] \cdot\nabla \left(\f{(\rho v)^\varepsilon}{\rho^{\varepsilon}}\right)\cdot \f{(\rho v)^\varepsilon}
{\rho^{\varepsilon}}\,dxdt\B|=0.
$$
Putting the above estimates together, using integration by parts with respect to $t$ and letting $\varepsilon\rightarrow 0$, for any $\phi(t)\in \mathcal{D} (0,T) $, we get
\begin{equation}\label{c27}
	\begin{aligned}
		&-\int_0^T\int_{\mathbb{T}^{n}}\phi(t)_{t}
		\frac{1}{2}\rho |v|^2\,dxdt=0.\\
\end{aligned}\end{equation}

With \eqref{c27} at hand, we have proved the energy conservation in the sense of distributions of time. To finish all the proof, it suffices to establish the energy equality up to the initial time $t=0$ by the similar method in \cite{[CLWX]} and \cite{[Yu2]}, for the convenience of readers and integrity of this paper, we provide the details here.
First we prove the continuity of $\sqrt{\rho}v(t)$ in the strong topology as $t\to 0^+$.  To this end, we define the function $f$ on $[0,T]$ as
$$f(t)=\int_{\mathbb{T}^n}(\rho v)(t,x)\cdot \varphi(x) \,dx,\ {\rm for\, any \,vector\,field}\ \varphi(x)\in \mathcal{D}(\mathbb{T}^n) ~\text{with}~\text{div}\,\varphi=0,$$
which is a continuous function with respect to $t\in [0,T]$. Moreover, because
$$\rho \in L^\infty(0,T; L^\infty (\mathbb{T}^n))\ {\rm and} \ \sqrt{\rho}v\in L^\infty(0,T;L^2(\mathbb{T}^n)),$$
it holds $\rho v\in L^\infty(0,T;L^{2}(\mathbb{T}^n)).$ Thus, according to the moment equation, for any vector field $\varphi\in \mathcal{D}(\mathbb{T}^n)$, we have
$$\frac{d}{dt}\int_{\mathbb{T}^n} (\rho v)(t,x)\cdot \varphi(x)\,dx=\int_{\mathbb{T}^n}\rho v\otimes v:\nabla \varphi(x)\,dx.$$
Then it follows from the Corollary 2.1 in  \cite{[Feireisl2004]} that
\begin{equation}\label{c28}
	\rho v\in C([0,T];L^{2}_{\text{weak}}(\mathbb{T}^n)).
\end{equation}
Similarly, we also have
\begin{equation}\label{c29}
	\sqrt{\rho}\in C([0,T];L^{l}_{\text{weak}}(\mathbb{T}^n)),\ \text{for\ any}\ l\in [1,+\infty).
\end{equation}
Based on above facts, we claim that for any $t_0\geq 0$,
$$\lim\limits_{t\rightarrow t_0^+}\|\sqrt{\rho} v(t)\|_{L^2(\mathbb{T}^n)}=\|\sqrt{\rho}v(t_0)\|_{L^2(\mathbb{T}^n)}.$$

In fact, by the
energy inequality \eqref{inequa} and \eqref{key01},  we have
\begin{equation}\label{d17}
	\begin{aligned}
		0&\leq \overline{\lim_{t\rightarrow 0^+}}\int_{\mathbb{T}^{n}} |\sqrt{\rho} v-\sqrt{\rho_0}v_0|^2 dx\\
		&=2\overline{\lim_{t\rightarrow 0^+}}\left(\int_{\mathbb{T}^{n}} \f{1}{2}\rho |v|^2 dx-\int\f{1}{2}\rho_0 |v_0|^2dx\right)+2\overline{\lim_{t\rightarrow 0^+}}\int_{\mathbb{T}^{n}}\sqrt{\rho_0}v_0\left(\sqrt{\rho_0}v_0-\sqrt{\rho} v\right)dx\\
		&\leq 2\overline{\lim_{t\rightarrow 0^+}}\int_{\mathbb{T}^{n}} \sqrt{\rho_0}v_0\left(\sqrt{\rho_0}v_0-\sqrt{\rho}v\right)dx\\
		&=0.
	\end{aligned}
\end{equation}
Regarding the last equality sign, it comes from
\begin{equation}\label{3.25}
	\begin{aligned}
		&2\overline{\lim_{t\rightarrow 0^+}}\int_{\mathbb{T}^{n}} \sqrt{\rho_0}v_0\left(\sqrt{\rho_0}v_0-\sqrt{\rho}v\right)dx\\
		=&2\overline{\lim_{t\rightarrow 0^+}}\int_{\mathbb{T}^{n}} \frac{\sqrt{\rho_0}v_0}{\sqrt{\rho}}\left(\sqrt{\rho}\sqrt{\rho_0}v_0-\rho v\right)dx\\
		\leq &2\overline{\lim_{t\rightarrow 0^+}}\int_{\mathbb{T}^{n}} \frac{\sqrt{\rho_0}v_0}{\sqrt{\rho}}\left(\sqrt{\rho}\sqrt{\rho_0}v_0-\rho_0 v_0\right)dx+2\overline{\lim_{t\rightarrow 0^+}}\int_{\mathbb{T}^{n}} \frac{\sqrt{\rho_0}v_0}{\sqrt{\rho}}\left({\rho_0}v_0-\rho v\right)dx\\
		\leq &2\overline{\lim_{t\rightarrow 0^+}}\int_{\mathbb{T}^{n}} (\sqrt{\rho_0}v_0)^2\left(\sqrt{\rho}-\sqrt{\rho_0}\right)dx+2\overline{\lim_{t\rightarrow 0^+}}\int_{\mathbb{T}^{n}}\sqrt{\rho_0} v_0\left({\rho_0}v_0-\rho v\right)dx\\
		=&0,
	\end{aligned}
\end{equation}
where we used \eqref{c28}, \eqref{c29} and $\sqrt{\rho_0}v_0\in L^{2+\delta}$ for any $\delta>0$. Until now, we have proved
\begin{equation}\label{d18}
	\sqrt{\rho} v(t)\rightarrow \sqrt{\rho }v(0)\ \ {\rm strongly\ in}\ L^2(\mathbb{T}^n)\ as\ t\rightarrow 0^+.
\end{equation}
Similarly, one can derive the right temporal continuity  of $\sqrt{\rho}v$ in $L^2(\mathbb{T}^n)$, i.e., for any $t_0\geq 0$,
\begin{equation}\label{d19}
	\sqrt{\rho} v(t)\rightarrow \sqrt{\rho }v(t_0)\ \ {\rm strongly\ in}\ L^2(\mathbb{T}^n)\ as\ t\rightarrow t_0^+.
\end{equation}
Before we go any further, it should be noted that \eqref{c27} remains valid for any function $\phi$ belonging to $W^{1,\infty}$ rather than $C^1$. From this, for any $t_0>0$, we redefine the test function $\phi$ as $\phi_\tau$ for some positive $\tau$ and $\alpha $ such that $\tau +\alpha <t_0$, namely,
\begin{equation}\label{4.16}
	\phi_\tau(t)=\left\{\begin{array}{lll}
		0, & 0\leq t\leq \tau,\\
		\f{t-\tau}{\alpha}, & \tau\leq t\leq \tau+\alpha,\\
		1, &\tau+\alpha \leq t\leq t_0,\\
		\f{t_0-t}{\alpha }, & t_0\leq t\leq t_0 +\alpha ,\\
		0, & t_0+\alpha \leq t.
	\end{array}\right.
\end{equation}
Then substituting this test function into \eqref{c27}, it yields
\begin{equation}
	\begin{aligned}
		-\f{1}{\alpha}\int_\tau^{\tau+\alpha}\int_{\mathbb{T}^{n}}& \f{1}{2}\rho |v|^2\,dxdt+\f{1}{\alpha}\int_{t_0}^{t_0+\alpha}\int_{\mathbb{T}^{n}} 	\f{1}{2}\rho |v|^2\,dxdt=0.
	\end{aligned}	
\end{equation}
Taking $\alpha\rightarrow 0$ and using the right continuity of $\sqrt{\rho} v(t)$ in $L^2(\mathbb{T}^n)$, one has
\begin{equation}\label{4.18}
	\begin{aligned}
		-\int_{\mathbb{T}^{n}}&\f{1}{2}\rho |v|^2(\tau)\,dx+\int_{\mathbb{T}^{n}}\f{1}{2}\rho |v|^2(t_0)\,dx=0.
	\end{aligned}
\end{equation}
Finally, letting $\tau\rightarrow 0$ and using \eqref{d19}, we can obtain
\begin{equation}\label{4.19}\ba
	\int_{\mathbb{T}^{n}}\f{1}{2}\rho |v|^2(t_0)\,dx=&\int_{\mathbb{T}^{n}}\f{1}{2}\rho_0 |v_0|^2\,dx,
	\ea\end{equation}
which completes the proof of Theorem \ref{the1.2}.
\end{proof}
\subsection{The case with vacuum}
In the sprit of \cite{[YWW]}, we show the key lemma to treat the case with vacuum.
\begin{lemma}\label{lem4.1}
Assume that $0\leq\rho<+\infty$ and  $\int_{\mathbb{T}^n} \rho dx>0$, if in addition suppose  $\omega\in L^{3}(0,T;L^{\f{3n}{n+2}}(\mathbb{T}^{n}))$ and $\sqrt{\rho v}\in L^{\infty}(0,T;L^{2}(\mathbb{T}^{n}))$, then there holds
$$
v\in L^{3}(0,T;L^{\f{3n}{n-1}}(\mathbb{T}^{n})).
$$
\end{lemma}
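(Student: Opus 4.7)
The goal is to upgrade the argument for the periodic case of Theorem \ref{the1.1}, where the bound $v\in L^{\infty}(0,T;L^2(\mathbb{T}^n))$ was used to control the spatial mean of $v$. Here we have only the weighted bound $\sqrt{\rho}v\in L^{\infty}(0,T;L^2(\mathbb{T}^n))$, so we need a Lions-type Poincar\'e inequality, invoked through the hypothesis $\int_{\mathbb{T}^n}\rho\,dx>0$, to recover the needed control on the zero-frequency part of $v$.

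The concrete steps are as follows. First, the Calder\'on-Zygmund theorem yields $\|\nabla v\|_{L^{3n/(n+2)}(\mathbb{T}^n)}\leq C\|\omega\|_{L^{3n/(n+2)}(\mathbb{T}^n)}$. Setting $\bar v(t)=\fint_{\mathbb{T}^n}v(t,x)\,dx$, the Poincar\'e-Sobolev inequality on the torus gives $\|v(t)-\bar v(t)\|_{L^{3n/(n-1)}(\mathbb{T}^n)}\leq C\|\nabla v(t)\|_{L^{3n/(n+2)}(\mathbb{T}^n)}$. Second, to estimate the constant-in-space mean $\bar v(t)$, I use the elementary inequality $\int_{\mathbb{T}^n}\rho|\bar v|^2\,dx\leq 2\int_{\mathbb{T}^n}\rho|v|^2\,dx+2\int_{\mathbb{T}^n}\rho|v-\bar v|^2\,dx$. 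The continuity equation $\rho_t+\nabla\cdot(\rho v)=0$ preserves the total mass, so $\int_{\mathbb{T}^n}\rho(t,x)\,dx=\int_{\mathbb{T}^n}\rho_0\,dx=:m_0>0$ is independent of $t$. Bounding $\rho$ by its $L^\infty$ norm (available from the enclosing hypothesis $0\le\rho<+\infty$ in the setting of Theorem \ref{the1.3}) and using the embedding $L^{3n/(n-1)}(\mathbb{T}^n)\hookrightarrow L^2(\mathbb{T}^n)$ on the bounded torus, I obtain $m_0|\bar v(t)|^2\leq C(\|\sqrt{\rho}v(t)\|_{L^2}^2+\|v(t)-\bar v(t)\|_{L^2}^2)\leq C(\|\sqrt{\rho}v(t)\|_{L^2}^2+\|\nabla v(t)\|_{L^{3n/(n+2)}}^2)$. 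Third, combining the last two displays gives the pointwise-in-time estimate $\|v(t)\|_{L^{3n/(n-1)}(\mathbb{T}^n)}\leq\|v(t)-\bar v(t)\|_{L^{3n/(n-1)}}+|\bar v(t)||\mathbb{T}^n|^{(n-1)/(3n)}\leq C(\|\sqrt{\rho}v(t)\|_{L^2}+\|\nabla v(t)\|_{L^{3n/(n+2)}})$, and taking the $L^3$ norm in $t$ and using $\sqrt{\rho}v\in L^{\infty}(0,T;L^2)$ and $\omega\in L^3(0,T;L^{3n/(n+2)})$ yields the claim.

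The main obstacle is the spatial-mean estimate in the second step. Without the positivity of the total mass $m_0$, the weighted control $\|\sqrt{\rho}v\|_{L^2}$ carries no information on portions of $v$ supported where $\rho$ vanishes, and the constant spatial mean $\bar v(t)$ cannot be controlled. The hypothesis $\int_{\mathbb{T}^n}\rho\,dx>0$, combined with mass conservation and the $L^\infty$ bound on $\rho$ available in the context of Theorem \ref{the1.3}, is precisely what allows this Lions-type Poincar\'e argument to close. Once the mean is handled, the remaining steps reduce to the Calder\'on-Zygmund plus Poincar\'e-Sobolev scheme already used in the periodic proof of Theorem \ref{the1.1}.
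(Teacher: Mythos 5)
Your argument is correct and follows essentially the same route as the paper: decompose $v$ into its mean-free part, controlled by Poincar\'e--Sobolev and Calder\'on--Zygmund via $\omega$, plus the spatial mean, controlled by a Lions-type weighted Poincar\'e argument that exploits $\int_{\mathbb{T}^n}\rho\,dx>0$ together with $\sqrt{\rho}v\in L^{\infty}(0,T;L^{2})$ and the boundedness of $\rho$ from the enclosing theorem. The only cosmetic difference is that you bound $|\bar v|^2$ via the quadratic splitting $\int\rho|\bar v|^2\leq 2\int\rho|v|^2+2\int\rho|v-\bar v|^2$, whereas the paper writes $\bar v=\frac{|\mathbb{T}^n|}{\int\rho}\int\rho\bar v$ and splits linearly; both yield the same pointwise-in-time estimate.
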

\begin{proof}
With the help of triangle inequality and Poincar\'e-Sobolev inequality, it is clear that
\be\ba\label{4.24}
\|v\|_{L^{3}(0,T;L^{\f{3n}{n-1}}(\mathbb{T}^{n}))}
\leq&\B\|v-\f{1}{|\mathbb{T}^n|}\int_{\mathbb{T}^n}v\,dy
\B\|_{L^{3}(0,T;L^{\f{3n}{n-1}}(\mathbb{T}^{n}))}+\B\|\f{1}{|\mathbb{T}^n|}\int_{\mathbb{T}^n}v\,dy
\B\|_{L^{3}(0,T;L^{\f{3n}{n-1}}(\mathbb{T}^{n}))}\\
\leq& C\|\nabla v
\|_{L^{3}(0,T;L^{\f{3n}{n+2}}(\mathbb{T}^{n}))}+C\B\| \int_{\mathbb{T}^n}v\,dy
\B\|_{L^{3} (0,T)}.
\ea\ee
Thus, to prove this lemma, it is enough to verify
$
\int_{\mathbb{T}^n}v\,dy\in L^{3}(0,T).
$
For this purpose, we apply H\"older inequality and classical Poincar\'e inequality to obtain
\be\ba\label{3.28}
\B|\int_{\mathbb{T}^n}\rho\left(v-\f{1}{|\mathbb{T}^n|}\int_{\mathbb{T}^n}v\,dy\right)  dx\B|\leq& C \|\rho\|_{L^\f{3n}{2n-2}(\mathbb{T}^{n})}
\B\|v-\f{1}{|\mathbb{T}^n |}\int_{\mathbb{T}^n}v\,dy\B\|_{L^{\f{3n}{n+2}}(\mathbb{T}^{n})}\\
\leq& C\|\rho\|_{L^\f{3n}{2n-2}(\mathbb{T}^{n})}
\|\nabla v\|_{L^{\f{3n}{n+2}}(\mathbb{T}^{n})}.
\ea\ee
It follows from the H\"older's inequality once again and  the upper bound of density, that
\be\label{3.29}
|\int_{\mathbb{T}^{n}} \rho v\,dx|\leq C\|\sqrt{\rho}v\|_{L^{2}(\mathbb{T}^{n})}.
\ee
Making use of triangle inequality once again, \eqref{3.28} and \eqref{3.29}, we conclude
$$\ba
\B|\int_{\mathbb{T}^n} v\,dy\B|=&\f{|\mathbb{T}^n|}{\int_{\mathbb{T}^d} \rho\,dx} \B|\int_{\mathbb{T}^n} \rho\left(\f{1}{|\mathbb{T}^n|}\int_{\mathbb{T}^n}v\,dy\right) \,dx\B|\\
\leq&\f{C}{\int_{\mathbb{T}^n} \rho\,dx} \B|\int_{\mathbb{T}^n}\rho\B[\left( \f{1}{|\mathbb{T}^n|}\int_{\mathbb{T}^n}v\,dy\right)-v\B] \,dx\B|+
\f{C}{\int_{\mathbb{T}^n} \rho\,dx} \B| \int_{\mathbb{T}^n} \rho v\,dx\B|\\
\leq& C
\|\nabla v\|_{L^{\f{3n}{n+2}}(\mathbb{T}^{n})}+C\|\sqrt{\rho}v\|_{L^{2}(\mathbb{T}^{n})}.
\ea$$
which implies, after integrating in time, that
$$\ba
\B\|\int_{\mathbb{T}^n} v \,dy\B\|_{L^{3}(0,T)}
\leq& C
\|\nabla v\|_{L^{3}(0,T;L^{\f{3n}{n+2}}(\mathbb{T}^{n}))}+C\|\sqrt{\rho}v\|_{L^{\infty}(0,T;L^{2}(\mathbb{T}^{n}))}.
\ea$$
Inserting this inequality into \eqref{4.24}, we find that
$$
\|v\|_{L^{3}(0,T;L^{\f{3n}{n-1}}(\mathbb{T}^{n}))}
\leq C\|\nabla v\|_{L^{3}(0,T;L^{\f{3n}{n+2}}(\mathbb{T}^{n}))}+C\|\sqrt{\rho}v\|_{L^{\infty}(0,T;L^{2}(\mathbb{T}^{n}))}.
$$
As a result, the divgence-free condition and Calder\'on-Zygmund theorem further imply
$$
\|v\|_{L^{3}(0,T;L^{\f{3n}{n-1}}(\mathbb{T}^{n}))}
\leq C\|\omega\|_{L^{3}(0,T;L^{\f{3n}{n+2}}(\mathbb{T}^{n}))}+C\|\sqrt{\rho}v\|_{L^{\infty}(0,T;L^{2}(\mathbb{T}^{n}))}.
$$
Thus, the proof of this lemma is finished.
\end{proof}

\begin{proof}[Proof of Theorem \ref{the1.3}]
When the density may contain vacuum, then $\frac{(\rho v)^\varepsilon}{\rho^\varepsilon}$ with mollifier kernal just in spatial direction is no longer a suitable test function. To avoid this difficulty, we intend to mollify the velocity $v$ both in space and time direction. Let $\phi(t)$ be a smooth function compactly supported in $(0,+\infty)$.
Taking inner product of $\eqref{NEuler}_2$ with $(\phi v^{\varepsilon})^\varepsilon$, using the incompressible condition, and integrating over $(0,T)\times \mathbb{T}^n$, it yields that
\begin{equation}\label{c1} \begin{aligned}
		\int_0^T\int_{\mathbb{T}^{n}} \phi(t)\B[\partial_{t}(\rho v)^{\varepsilon}+ \Div(\rho v\otimes v)^{\varepsilon} +\nabla \pi^\varepsilon\B]\cdot v^{\varepsilon}\,dxdt=0.
\end{aligned}\end{equation}
On the basis of \eqref{c1}, we can deduce from a straightforward computation that
\begin{equation}\label{c2}
	\begin{aligned}
		&\int_0^T\int_{\mathbb{T}^{n}} \phi(t)\partial_{t} (\rho v )^{\varepsilon}\cdot v^{\varepsilon}\,dxdt\\
=&\int_0^T\int_{\mathbb{T}^{n}} \phi(t)\B[ \partial_{t} (\rho v )^{\varepsilon}-\partial_{t}(\rho v^{\varepsilon})\B]\cdot v^{\varepsilon}\,dxdt+ \int_0^T\int_{\mathbb{T}^{n}} \phi(t)\partial_{t}(\rho v^{\varepsilon})\cdot v^{\varepsilon}\,dxdt \\
		=& \int_0^T\int_{\mathbb{T}^{n}} \phi(t)\B[\partial_{t} (\rho v )^{\varepsilon}-\partial_{t}(\rho v^{\varepsilon})\B]\cdot v^{\varepsilon}\,dxdt+\int_0^T\int_{\mathbb{T}^{n}} \phi(t)\rho\partial_t{\frac{|v^{\varepsilon}|^2}{2}}\,dxdt \\
		&+\int_0^T\int_{\mathbb{T}^{n}} \phi(t)\rho_{t}|v^{\varepsilon}|^2\,dxdt.
\end{aligned}\end{equation}

Thanks to integration by parts and the mass equation $\eqref{NEuler}_1$, we can reformulate the nonlinear term as
\begin{align}
	&\int_0^T\int_{\mathbb{T}^{n}}\phi(t) \Div(\rho v\otimes v)^{\varepsilon}\cdot v^{\varepsilon}\,dxdt\nonumber\\
	=& \int_0^T\int_{\mathbb{T}^{n}}\phi(t)  \Div[(\rho v\otimes v)^{\varepsilon}-(\rho  v)\otimes v^{\varepsilon}]\cdot v^{\varepsilon}\,dxdt+\int_0^T\int_{\mathbb{T}^{n}}\phi(t)\Div(\rho  v\otimes v^{\varepsilon})\cdot v^{\varepsilon}\,dxdt\nonumber\\
	=& -\int_0^T\int_{\mathbb{T}^{n}}\phi(t)  [(\rho v\otimes v)^{\varepsilon}-(\rho  v)\otimes v^{\varepsilon}]\cdot\nabla v^{\varepsilon} \,dxdt+  \int_0^T\int_{\mathbb{T}^{n}} \phi(t)\Big(\Div (\rho v ) |v^{\varepsilon}|^{2}+\f12 \rho v\cdot \nabla|v^{\varepsilon}  |^{2}
	\Big)\,dxdt\nonumber\\
	=& -\int_0^T\int_{\mathbb{T}^{n}}\phi(t) [(\rho v\otimes v)^{\varepsilon}-(\rho  v)\otimes v^{\varepsilon}]\cdot\nabla  v^{\varepsilon}  \,dxdt+\f{1}{2}\int_0^T\int_{\mathbb{T}^{n}}\phi(t) \Div (\rho v ) |v^{\varepsilon}|^{2}\,dxdt\nonumber\\
	=& -\int_0^T\int_{\mathbb{T}^{n}}\phi(t)  [(\rho v\otimes v)^{\varepsilon}-(\rho  v)\otimes v^{\varepsilon}]\cdot\nabla v^{\varepsilon}  \,dxdt-\frac{1}{2}\int_0^T\int_{\mathbb{T}^{n}} \phi(t) \rho_{t} |v^{\varepsilon}|^{2}\,dxdt. \label{c3}
\end{align}
For the pressure term, due to the divergence free condition $\text{div}\,v=0$, one has
\begin{equation}
	\int_0^T\int_{\mathbb{T}^{n}} \phi(t) v^\varepsilon \cdot\nabla \pi^\varepsilon\,dxdt=-\int_0^T\int_{\mathbb{T}^{n}} \phi(t)\text{div}\, v^\varepsilon \pi^\varepsilon\,dxdt=0.
\end{equation}
As a consequence, collecting the above estimates and using integration by parts, we arrive at the desired equality
\begin{equation}\label{c91}
	\begin{aligned}
		&-\int_0^T\int_{\mathbb{T}^{n}} \phi(t)_t\left(\rho{\frac{|v^{\varepsilon}|^2}{2}} \right)\,dxdt  \\
		=&-\int_0^T\int_{\mathbb{T}^{n}} \phi(t)v^{\varepsilon} \B[\partial_{t} (\rho v )^{\varepsilon}-\partial_{t}(\rho v^{\varepsilon})\B]\,dxdt\\&+\int_0^T\int_{\mathbb{T}^{n}}\phi(t) [(\rho v\otimes v)^{\varepsilon}-(\rho  v)\otimes v^{\varepsilon}]\cdot\nabla v^{\varepsilon} \,dxdt.
\end{aligned}\end{equation}
For the first term in \eqref{c91}, by H\"older inequality and Lemma \ref{pLions}, there holds
\begin{equation}\label{3.81}\begin{aligned}
&\int_0^T\int_{\mathbb{T}^{n}} \phi(t)\B[\partial_{t} (\rho v )^{\varepsilon}-\partial_{t}(\rho v^{\varepsilon})\B]\cdot v^{\varepsilon} \,dxdt\\
&\leq C\|v^{\varepsilon}\|_{L^3(0,T;L^{\f{3n}{n-1}}(\mathbb{T}^{n}))}
\|\partial_{t} (\rho v )^{\varepsilon}-\partial_{t}(\rho v^{\varepsilon})\|_{L^{\f{3}{2}}(0,T;L^{\f{3n}{2n+1}}(\mathbb{T}^{n}))}\\
&\leq C\|v\|^{2}_{L^3(0,T;L^{\f{3n}{n-1}}(\mathbb{T}^{n}))}
\left(\|\rho_{t}\|_{L^{3}(0,T;L^{\f{3n}{n+2}}(\mathbb{T}^{n}))}+\|\nabla \rho\|_{L^{3}(0,T;L^{\f{3n}{n+2}}(\mathbb{T}^{n}))}\right).
\end{aligned}\end{equation}
To control the terms on the right hand side of \eqref{3.81}, we apply the mass equation $\eqref{NEuler}_1$ as follows
$$
\rho_t=-2\sqrt{\rho}v\cdot\nabla\sqrt{\rho} \ \text{and}\ \nabla \rho=2\sqrt{\rho}\nabla \sqrt{\rho}.$$
And then the triangle inequality and H\"older's inequality allow us to get
\begin{equation}\label{3.91}\begin{aligned}
\|\rho_{t}\|_{L^{3}(0,T;L^{\f{3n}{n+2}}(\mathbb{T}^{n}))}&\leq C\|-2\sqrt{\rho}v\cdot\nabla\sqrt{\rho} \|_{L^{3}(0,T;L^{\f{3n}{n+2}}(\mathbb{T}^{n}))}\\
&\leq C\|v \|_{L^3(0,T;L^{\f{3n}{n-1}}(\mathbb{T}^{n}))}\|\nabla\sqrt{\rho} \|_{L^{\infty}(0,T;L^{n}(\mathbb{T}^{n}))},
 \end{aligned}\end{equation}
and
\begin{equation}\label{3.92}
	\begin{aligned}
	\|\nabla \rho\|_{L^{3}(0,T;L^{\f{3n}{n+2}}(\mathbb{T}^{n}))}\leq & C\|\sqrt{\rho}\nabla \sqrt{\rho}\|_{L^{3}(0,T;L^{\f{3n}{n+2}}(\mathbb{T}^{n}))}\leq C\|\nabla \sqrt{\rho}\|_{L^{\infty}(0,T;L^{n}(\mathbb{T}^{n}))}.
	\end{aligned}
\end{equation}
Substituting
  \eqref{3.91} and \eqref{3.92} into \eqref{3.81}, we have
 \begin{equation}\label{c3.11}\begin{aligned}
&\int_0^T\int_{\mathbb{T}^{n}} \phi(t)v^{\varepsilon} \B[\partial_{t} (\rho v )^{\varepsilon}-\partial_{t}(\rho v^{\varepsilon})\B]\,dxds\\\leq&
C\|v\|^{2}_{L^3(0,T;L^{\f{3n}{n-1}}(\mathbb{T}^{n}))}
\Big(\|v \|_{L^3(0,T;L^{\f{3n}{n-1}}(\mathbb{T}^{n}))}+1\Big)\|\nabla\sqrt{\rho} \|_{L^{\infty}(0,T;L^{n}(\mathbb{T}^{n}))}\\
&+C\|v\|^{2}_{L^3(0,T;L^{\f{3n}{n-1}}(\mathbb{T}^{n}))}\| \nabla v\|_{L^{3}(0,T;L^{\f{3n}{n+2}}(\mathbb{T}^{n}))}\\
\leq& C.
 \end{aligned}\end{equation}
Making use of  Lemma \ref{pLions}, by choosing $p_1=3, q_1=\frac{3n}{n+2}, p_2=3$ and $q_2=\f{3n}{n-1}$, we see that, as $\varepsilon\rightarrow0$,
 $$
 \int_0^T\int_{\mathbb{T}^{n}} \phi(t)v^{\varepsilon} \B[\partial_{t} (\rho v )^{\varepsilon}-\partial_{t}(\rho v^{\varepsilon})\B]\,dxds\rightarrow0.$$
Using the integration by parts and H\"older inequality, we can estimate the second term on the right hand side of \eqref{c91} as follows
 \begin{equation}\label{3.11}\begin{aligned}
 		&\left|\int_0^T\int_{\mathbb{T}^{n}}\phi(t)  [(\rho v\otimes v)^{\varepsilon}-(\rho  v)\otimes v^{\varepsilon}]\cdot\nabla v^{\varepsilon}  \,dxdt\right| \\
 		\leq &C\|\nabla v^\varepsilon\|_{L^{3}(0,T;L^{\f{3n}{n+2}}(\mathbb{T}^{n}))}\|(\rho v\otimes v)^{\varepsilon}-(\rho  v)\otimes v^{\varepsilon}\|_{L^{\f{3}{2}}(0,T; L^{\f{3n}{2(n-1)}}(\mathbb{T}^{n}))}\\
 		\leq & C\|\nabla v\|_{L^{3}(0,T;L^{\f{3n}{n+2}}(\mathbb{T}^{n}))}\|(\rho v \otimes v)^\varepsilon- \rho v \otimes v\|_{L^{\f{3}{2}}(0,T:L^{\f{3n}{2(n-1)}}(\mathbb{T}^{n}))}\\
 &+C\|\nabla v\|_{L^{3}(0,T;L^{\f{3n}{n+2}}(\mathbb{T}^{n}))}\|\rho v\otimes v- \rho v\otimes v^\varepsilon\|_{L^{\f{3}{2}}(0,T;L^{\f{3n}{2(n-1)}}(\mathbb{T}^{n}))}\\
 		\leq & C\|\nabla v\|_{L^{3}(0,T;L^{\f{3n}{n+2}}(\mathbb{T}^{n}))}\|(\rho v \otimes v)^\varepsilon- \rho v \otimes v\|_{L^{\f{3}{2}}(0,T;L^{\f{3n}{2(n-1)}}(\mathbb{T}^{n}))}\\
 &+C\|\nabla v\|_{L^{3}(0,T;L^{\f{3n}{n+2}}(\mathbb{T}^{n}))}\|\rho v\|_{L^3(0,T;L^{\f{3n}{n-1}}(\mathbb{T}^{n}))}\|v-v^\varepsilon\|_{L^3(0,T;L^{\f{3n}{n-1}}(\mathbb{T}^{n}))}.
 \end{aligned}\end{equation}
 According to the standard properties of mollifiers, it yields that
\begin{equation}\label{4.32}
	\int_0^T\int_{\mathbb{T}^{n}}\phi(t)  [(\rho v\otimes v)^{\varepsilon}-(\rho  v)\otimes v^{\varepsilon}]\cdot\nabla v^{\varepsilon}  \,dxdt \rightarrow0,\  \text{ as }\varepsilon\rightarrow0.
\end{equation}
Combining \eqref{c91},  \eqref{c3.11}-\eqref{4.32} and passing to the limits as $\varepsilon\rightarrow 0$, we finally obtain
\begin{equation}\label{c11}
	\begin{aligned}
		-\int_0^T\int_{\mathbb{T}^{n}} \phi_t \frac{1}{2}\rho |v|^2\,dxdt=0.	
	\end{aligned}
\end{equation}

Similar with the proof in Theorem \eqref{the1.2}, it suffices to establish the energy equality up to the initial time $t=0$. First, as the same manner of derivation of \eqref{c28} in Section \ref{sec4.1}, we have
\begin{equation}\label{4.35}
	\rho v\in C([0,T];L^{2}_{\text{weak}}(\mathbb{T}^n)).
\end{equation}
Moreover, from the mass equation $\eqref{NEuler}_1$, one has
\begin{equation}\label{4.36}
	\begin{aligned}
		\partial_t(\sqrt{\rho})=-v\cdot \nabla \sqrt{\rho},
	\end{aligned}
\end{equation}
which together with \eqref{key02} implies
$$\partial_t\sqrt{\rho}\in L^{3}(0,T;L^{\f{3n}{n+2}}(\mathbb{T}^n)),$$
and $$ \nabla \sqrt{\rho }\in L^{\infty}(0,T; L^{n}(\mathbb{T}^n)).$$
Hence, making use of the Aubin-Lions Lemma \ref{AL}, it holds
\begin{equation}\label{4.37}
\sqrt{\rho }\in C([0,T];L^{m}(\mathbb{T}^n)),\ \text{ for any} \ 1\leq m< \infty.
\end{equation}
Meanwhile, thanks to the energy inequality \eqref{inequa}, \eqref{4.35} and \eqref{4.37}, we have
\begin{equation}\label{c16}
	\begin{aligned}
		0&\leq \overline{\lim_{t\rightarrow 0}}\int_{\mathbb{T}^{n}} |\sqrt{\rho} v-\sqrt{\rho_0}v_0|^2 \,dx\\
		&=2\overline{\lim_{t\rightarrow 0}} \left(\int_{\mathbb{T}^{n}}\f{1}{2}\rho |v|^2 dx-\int_{\mathbb{T}^{n}}\f{1}{2}\rho_0 |v_0|^2\,dx\right)+2\overline{\lim_{t\rightarrow 0}}\int_{\mathbb{T}^{n}}\sqrt{\rho_0}v_0\left(\sqrt{\rho_0}v_0-\sqrt{\rho} v\right)\,dx\\
		&\leq 2\overline{\lim_{t\rightarrow 0}}\int_{\mathbb{T}^{n}} \sqrt{\rho_0}v_0\left(\sqrt{\rho_0}v_0-\sqrt{\rho}v\right)\,dx\\
		&=2\overline{\lim_{t\rightarrow 0}}\int_{\mathbb{T}^{n}} v_0 \left(\rho_0 v_0 -\rho v\right)\,dx+2\overline{\lim_{t\rightarrow 0}}\int_{\mathbb{T}^{n}} v_0 \sqrt{\rho }v\left(\sqrt{\rho }-\sqrt{\rho_0}\right)\,dx=0,
	\end{aligned}
\end{equation}
from
which it follows that
\begin{equation}\label{c17}
	\sqrt{\rho} v(t)\rightarrow \sqrt{\rho }v(0)\ \ {\rm strongly\ in}\ L^2(\mathbb{T}^n)\ {\rm as}\ t\rightarrow 0^+.
\end{equation}
Analogously, one has the right temporal continuity  of $\sqrt{\rho}v$ in $L^2(\mathbb{T}^n)$, namely, for any $t_0\geq 0$,
\begin{equation}\label{c18}
	\sqrt{\rho} v(t)\rightarrow \sqrt{\rho }v(t_0)\ \ {\rm strongly\ in}\ L^2(\mathbb{T}^n)\ {\rm as}\ t\rightarrow t_0^+.
\end{equation}
Finally, by repeating the process of \eqref{4.16}-\eqref{4.19}, we have
\begin{equation}\ba
	\int_{\mathbb{T}^{n}}\f{1}{2}\rho |v|^2(t_0)\,dx=&\int_{\mathbb{T}^{n}}\f{1}{2}\rho_0 |v_0|^2\,dx,
	\ea\end{equation}
which completes the proof of Theorem \ref{the1.3}.
\end{proof}

\section*{Acknowledgement}
Liu was partially supported by National Natural Science Foundation of China (No.  11801018), Beijing Natural Science Foundation (No. 1192001) and Beijing University of Technology (No. 006000514122514). Wang was partially supported by National Natural
Science Foundation of China (No. 11971446, No. 12071113 and No. 11601492). Ye was partially supported by National Natural
Science Foundation of China (No. 11701145).


\begin{thebibliography}{00}
\bibitem{[BCD]}H. Bahouri, J. Y. Chemin and R. Danchin, Fourier Analysis and Nonlinear Partial Differential Equations, Grundlehren dermathematischen Wissenschaften 343, Springer-Verlag, 2011.
\bibitem{[BGSTW]}
 C. Bardos, P. Gwiazda, A. \'Swierczewska-Gwiazda, E. S. Titi and   E. Wiedemann,  Onsager's conjecture in bounded domains for the conservation of entropy and other companion laws. Proc. R. Soc. A,  475 (2019),   18 pp.

\bibitem{[BC]}L. C. Berselli and E. Chiodaroli,   On the energy equality for the 3D Navier-Stokes equations. Nonlinear Anal. 192 (2020), 111704, 24 pp.

\bibitem{[Chen]}
Q.  Chen,
 Energy Conservation in 2-D Density-Dependent Euler Equations with Regularity
Assumptions on the Vorticity. J. Math. Fluid Mech. (2020) 22:6.


\bibitem{[CY]}    R. M. Chen and C. Yu, Onsager's energy conservation for inhomogeneous Euler equations, J. Math. Pures Appl. 131 (2019), 1--16.

\bibitem{[CLWX]}
M.  Chen, Z.  Liang, D. Wang and  R. Xu, Energy equality in compressible fluids with physical boundaries. SIAM J. Math. Anal. 52 (2020),  1363--1385.

\bibitem{[CCFS]}
A. Cheskidov,  P. Constantin, S. Friedlander and R. Shvydkoy, Energy conservation and Onsager's conjecture for the Euler equations. Nonlinearity, 21 (2008), 1233--52.

 \bibitem{[CLNS]}
A. Cheskidov, M. C. Lopes Filho, H. J. Nussenzveig Lopes and R. Shvydkoy.
Energy Conservation in Two-dimensional Incompressible
Ideal Fluids.
Commun. Math. Phys. 348 (2016), 129--143.

\bibitem{[CET]} P. Constantin, E. Weinan and E.S. Titi, Onsager's conjecture on the energy conservation for solutions of Euler's equation. Commun. Math. Phys. 165   (1994), 207--209.
\bibitem{[DHWX]}Y. Dai, W. Hu, J. Wu and B. Xiao, The Littlewood-Paley decomposition for periodic functions and applications
to the Boussinesq equations, Analysis and Applications, 18 (2020),  639--682.
\bibitem{[DS2]}C. De Lellis and  L  J.  Sz\'ekelyhidi,
On admissibility criteria forweak solutions of the Euler equations. Arch. Ration. Mech. Anal., 195 (2010), 225--260.

\bibitem{[DS0]}
C. De Lellis and L  J.  Sz\'ekelyhidi,
  The Euler equations as a differential inclusion,
Ann. Math. 170  (2009), 1417--1436.

\bibitem{[DS1]}
C. De Lellis and  L  J.  Sz\'ekelyhidi,  Dissipative continuous Euler flows. Invent Math. 193 (2013), 377--407.
\bibitem{[DS3]}C. De Lellis and L  J.  Sz\'ekelyhidi,
Dissipative Euler flows and onsager's conjecture. J Eur Math Soc. 16 (2014), 1467--1505.

\bibitem{[GLE]}
G. L. Eyink, Energy dissipation without viscosity in ideal hydrodynamics. I. Fourier analysis and local energy transfer. Phys. D 78 (1994), 222--240.




\bibitem{[Feireisl2004]}
E. Feireisl, Dynamics of Viscous Compressible Fluids, Oxford University Press, 2004.


\bibitem{[EGSW]}
     E.   Feireisl, P. Gwiazda, A. Swierczewska-Gwiazda and E. Wiedemann, Regularity and energy conservation for the compressible euler equations. Arch Ration Mech Anal. 223 (2017), 1375--1395.
  \bibitem{[FW2018]}
U. S. Fjordholm and E. Wiedemann, Statistical solutions and Onsager's conjecture,
Phys. D     376-377 (2018), 259--265.
  \bibitem{[Isett]}
P. Isett,
A proof of Onsager's conjecture.
Ann. of Math.  188 (2018),  871--963.


 \bibitem{[LS]}
T. M. Leslie and R. Shvydkoy,
The energy balance relation for weak solutions of the density-dependent Navier-Stokes equations
J.   Differential Equations. 261 (2016),
  3719--3733.

\bibitem{[Lions2]} P. L. Lions, Mathematical Topics in Fluid Mechanics, vol. 2.  Compressible Models, Oxford University Press, New York, 1998.

 \bibitem{[MB]}
A. Majda and A. Bertozzi, Vorticity and Incompressible Flow, in: Cambridge Texts in Applied Mathematics, vol. 27, Cambridge University Press, Cambridge, 2002.


 \bibitem{[NNT]}
Q. Nguyen,  P. Nguyen and B. Tang, Energy equalities for compressible Navier-Stokes equations, Nonlinearity 32 (2019), 4206--4231.

 \bibitem{[NNT2]}
Q. Nguyen,  P. Nguyen and B. Tang, Energy conservation for inhomogeneous incompressible and compressible Euler equations, J. Differential Equations 269 (2020),   7171--7210.

   \bibitem{[Onsager]}
L. Onsager, Statistical hydrodynamics, Nuovo Cim. (Suppl.) 6 (1949), 279--287.

\bibitem{[Shvydkoy]}	
R. Shvydkoy, A geometric condition implying an energy equality for solutions of the 3D Navier-Stokes equation. J. Dynam. Differential Equations 21 (2009),   117--125.
\bibitem{[Shvydkoy2]}	R. Shvydkoy, Lectures on the Onsager conjecture, Discrete Contin. Dyn. Syst. Ser. S 3
(2010), no. 3, 473--496.
\bibitem{[Simon]}
J. Simon, Compact sets in the space $L^p(0, T; B)$, Ann. Mat. Pura Appl., 146 (1987),  65--96.
 \bibitem{[WY]} Y. Wang and Y. Ye,
 A general sufficient criterion for energy conservation in the Navier-Stokes system.
arXiv:2106.01233. 2021
\bibitem{[WMH]} Y. Wang, X. Mei and Y. Huang,
Energy equality of the 3D Navier-Stokes equations and generalized Newtonian equations. J. Math. Fluid Mech. 24 (2022), 65.
  \bibitem{[YWW]}
Y. Ye, Y. Wang and W. Wei, Energy equality in the isentropic compressible Navier-Stokes equations allowing vacuum.  arXiv:2108.09425, 2021.
 \bibitem{[Yu2]}
C. Yu, Energy conservation for the weak solutions of the compressible Navier-Stokes equations.
  Arch. Ration. Mech. Anal. 225 (2017),   1073--1087.

\end{thebibliography}
\end{document}